\documentclass[a4paper,11pt]{article}

\usepackage[utf8]{inputenc}
\usepackage[T1]{fontenc}
\usepackage{graphicx}
\usepackage{amsmath,amsfonts,amssymb,amsthm}
\usepackage{mathtools}
\usepackage{mathrsfs}
\usepackage{xcolor}
\usepackage{hyperref}
\usepackage{booktabs}
\usepackage{bbold}
\usepackage{bm}
\usepackage{color}
\usepackage[font=footnotesize]{caption}
\usepackage{enumitem}
\usepackage{empheq}
\usepackage{framed}
\usepackage{fullpage}
\usepackage{hyperref}
\usepackage{soul}
\usepackage{dirtytalk}

\mathtoolsset{showonlyrefs}

\definecolor{myred}{HTML}{880000}
\definecolor{mygreen}{HTML}{008800}
\definecolor{myblue}{HTML}{000088}
\definecolor{linkblue}{HTML}{0000BB}

\hypersetup{
  colorlinks,
  linkcolor={myred},
  citecolor={myblue},
  urlcolor={linkblue}
}

\newcommand{\E}{{\mathbf E}}

\renewcommand{\le}{\leqslant}
\renewcommand{\ge}{\geqslant}

\newcommand{\argmin}{\mathop{\mathrm{arg}\,\mathrm{min}}}

\DeclareMathOperator{\tr}{Tr}

\newcommand{\ind}[1]{\bm 1 \left( #1 \right)}

\newcommand{\eps}{\varepsilon}

\renewcommand{\top}{\mathsf{T}}

\usepackage{xspace}

\newcommand{\eg}{e.g.\@\xspace}

\newtheorem{proposition}{Proposition}
\newtheorem{theorem}{Theorem}
\newtheorem*{theorem*}{Theorem}
\newtheorem{lemma}{Lemma}

\theoremstyle{definition}

\theoremstyle{remark}
\newtheorem{remark}{Remark}

\title{Statistically Optimal Robust Mean and Covariance Estimation for Anisotropic Gaussians}
\author{Arshak Minasyan\thanks{CREST, ENSAE, Institut Polytechnique de Paris, \href{mailto:arshak.minasyan@ensae.fr}{arshak.minasyan@ensae.fr}}\quad and\quad Nikita Zhivotovskiy\thanks{University of California, Berkeley. Department of Statistics, \href{mailto:zhivotovskiy@berkeley.edu}{zhivotovskiy@berkeley.edu} }}
\begin{document}

\maketitle
\begin{abstract}
Assume that $X_{1}, \ldots, X_{N}$ is an $\eps$-contaminated sample of $N$ independent Gaussian  vectors in $\mathbb{R}^d$ with mean $\mu$ and covariance $\Sigma$. In the strong $\eps$-contamination model we assume that the adversary replaced an $\eps$ fraction of vectors in the original Gaussian sample by any other vectors. We show that there is an estimator $\widehat \mu$ of the mean satisfying, with probability at least $1 - \delta$, a bound of the form
\[
\|\widehat{\mu} - \mu\|_2 \le c\left(\sqrt{\frac{\tr(\Sigma)}{N}} + \sqrt{\frac{\|\Sigma\|\log(1/\delta)}{N}} + \eps\sqrt{\|\Sigma\|}\right),
\]
where $c > 0$ is an absolute constant and $\|\Sigma\|$ denotes the operator norm of $\Sigma$. In the same contaminated Gaussian setup, we construct an estimator $\widehat \Sigma$ of the covariance matrix $\Sigma$ that satisfies, with probability at least $1 - \delta$,
\[
\left\|\widehat{\Sigma} - \Sigma\right\| \le c\left(\sqrt{\frac{\|\Sigma\|\tr(\Sigma)}{N}} + \|\Sigma\|\sqrt{\frac{\log(1/\delta)}{N}} + \eps\|\Sigma\|\right).
\]
Both results are optimal up to multiplicative constant factors. Despite the recent significant interest in robust statistics, achieving both dimension-free bounds in the canonical Gaussian case remained open. In fact, several previously known results were either dimension-dependent and required $\Sigma$ to be close to identity, or had a sub-optimal dependence on the contamination level $\eps$. 

As a part of the analysis, we derive sharp concentration inequalities for central order statistics of Gaussian, folded normal, and chi-squared distributions. 
\end{abstract}

\section{Robust multivariate mean estimation}
Arguably the first rigorously studied question in robust statistics is the mean (or the location parameter) estimation for contaminated Gaussian distributions \cite{Huber_1964}. A natural extension of this question is a problem of multivariate Gaussian mean estimation when the data is contaminated by a malicious adversary.
When working with uncontaminated data, the celebrated Borell, Tsirelson-Ibragimov-Sudakov \cite{borell1975brunn, cirel1976norms} Gaussian concentration inequality implies the sharp non-asymptotic bound for the performance of the sample mean: If $X_1 \ldots, X_{N}$ are independent Gaussian random vectors in $\mathbb{R}^d$ with mean $\mu$ and covariance $\Sigma$, then, with probability at least $1 - \delta$, 
\begin{equation}
\label{eq:borelsudakovtsirelson}
  \Bigg\|\frac{1}{N}\sum\limits_{i = 1}^N X_i - \mu\Bigg\|_2 \le \sqrt{\frac{\tr(\Sigma)}{N}} + \sqrt{\frac{2\|\Sigma\|\log(1/\delta)}{N}}.  
\end{equation}
Our question is to estimate the mean $\mu$ of a Gaussian random vector when an $\eps$-fraction of all observations is corrupted by a malicious adversary, who knows both the \say{clean} sample and our estimator. We focus on the multivariate case and make no assumptions on corrupted observations. This model is usually called the strong contamination model \cite{diakonikolas2019recent, diakonikolas2020outlier} (for the exact definition of our model see \cite[Definition 1]{dalalyan2022all}) and includes many known contamination models such as Huber's $\eps$-contamination model \cite{Huber_1964}. Such a contaminated sample of Gaussian vectors will be referred to as the $\eps$-\emph{contaminated sample}. Clearly, the sample mean can be compromised even if there is a single outlier, so we will be aiming to provide an analog of \eqref{eq:borelsudakovtsirelson} for a different estimator. Despite significant recent progress in robust statistics (we refer to the recent surveys where both the statistical \cite{lugosi2019mean} and algorithmic \cite{diakonikolas2019recent} aspects are discussed in detail), there is still no sharp analog of the Gaussian concentration inequality \eqref{eq:borelsudakovtsirelson} when the $\eps$-strong adversarial contamination is allowed. Although the Gaussian case is historically the starting point in the theory of robust statistics, our question remains open even from an information-theoretic point of view, without considering computational aspects. Before stating our first bound, we need an additional definition. Given a covariance matrix $\Sigma$, its {\it{effective rank}} is defined as 
\[
    \mathbf{r}(\Sigma) = \frac{\tr(\Sigma)}{\|\Sigma\|},
\]
where $\tr(\Sigma)$ is the trace of matrix $\Sigma$. Obviously, $1 \le \mathbf{r}(\Sigma) \le d$ for a $d$ by $d$ covariance matrix $\Sigma$, but can be much smaller if the distribution of the data is anisotropic and is defined by several principal directions. We are now ready to present our first bound. 
\begin{theorem}[Robust mean estimation in the Gaussian case]
\label{thm:maintheorem}
There are absolute constants $c_1, c_2> 0$ such that the following holds. Assume that $X_{1}, \ldots, X_{N}$ is an $\eps$-contaminated sample of Gaussian random vectors in $\mathbb{R}^d$ with mean $\mu$ and covariance $\Sigma$. Let $\eps < c_1$, then there is an {estimator $\widehat{\mu}$ satisfying}, with probability at least $1 - \delta$,
\[
\|\widehat{\mu} - \mu\|_2 \le c_2\sqrt{\|\Sigma\|}\left(\sqrt{\frac{\mathbf{r}(\Sigma)}{N}} + \sqrt{\frac{\log(1/\delta)}{N}} + \eps\right).
\]
Up to multiplicative constant factors, no estimator of the Gaussian mean can perform better.
\end{theorem}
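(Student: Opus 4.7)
The plan is to adopt a Lugosi--Mendelson-style tournament framework, with the one-dimensional selector specialised to exploit Gaussian structure so that contamination is paid for at the sharp rate $\eps\sqrt{\|\Sigma\|}$ rather than the $\sqrt{\eps\|\Sigma\|}$ that a plain median-of-means yields. For every $v\in S^{d-1}$ let $T_v$ denote a one-dimensional robust estimator of $v^\top\mu$ computed from the projected sample $v^\top X_1,\dots,v^\top X_N$; a natural choice is a quantile-of-means with threshold close to $1/2$, or simply the empirical median. Define $\widehat\mu$ to be any element of
\[
\mathcal{S}\;=\;\Big\{z\in\R^d\ :\ \sup_{v\in S^{d-1}}\!\big(\,|T_v - v^\top z|\,\big)\;\le\; R\Big\},
\]
for a threshold $R$ to be chosen so that $\mu\in\mathcal{S}$ with probability at least $1-\delta$. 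On that event, the triangle inequality $|v^\top(\widehat\mu-\mu)|\le|T_v-v^\top\widehat\mu|+|T_v-v^\top\mu|\le 2R$ applied in the maximising direction yields $\|\widehat\mu-\mu\|_2\le 2R$.

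For a fixed direction $v$, the clean projections are $\mathcal{N}(v^\top\mu,\sigma_v^2)$ with $\sigma_v=\sqrt{v^\top\Sigma v}$, and combining the sharp concentration for central order statistics of Gaussians announced in the abstract with the fact that adversarial replacement of an $\eps$-fraction shifts the median by at most $\sigma_v[\Phi^{-1}(\tfrac12+\eps)-\Phi^{-1}(\tfrac12)]\asymp \eps\sigma_v$ should give
\[
|T_v-v^\top\mu|\;\le\;c\,\sigma_v\Big(\sqrt{\log(1/\delta)/N}+\eps\Big)
\]
with probability $1-\delta$. The $\eps\sigma_v$ bias is the exact worst-case quantile shift in a Gaussian, and this is what distinguishes the present bound from the plain median-of-means, which with $K\asymp\eps N$ blocks can only deliver the weaker $\sqrt{\eps}\,\sigma_v$. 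Since $\sup_{v\in S^{d-1}}\sigma_v=\sqrt{\|\Sigma\|}$, this pointwise control already accounts for the second and third terms of the theorem.

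The hard part is turning this pointwise bound into a uniform bound over $v\in S^{d-1}$ while picking up only $\sqrt{\tr(\Sigma)/N}$ rather than any factor of $d$. The plan here is to apply generic chaining to the stochastic process $v\mapsto T_v-v^\top\mu$ in the natural sub-Gaussian pseudometric $\rho(v,v')=\|\Sigma^{1/2}(v-v')\|_2$, under which $(v-v')^\top X$ is $\mathcal{N}(0,\rho(v,v')^2)$. The main technical obstacle is establishing sub-Gaussian increments of order $\rho(v,v')/\sqrt N$ for the robust selector; because a quantile is a discontinuous functional of $v$, one would likely replace $T_v$ by a smoothed $L$-estimator averaging several projected order statistics in a central window around the median, whose Lipschitz constant in $v$ can be explicitly controlled. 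Once this increment bound is in place, Talagrand's majorizing-measure theorem applied to the ellipsoid $\Sigma^{1/2}B_2^d$, together with the identity $\gamma_2(\Sigma^{1/2}B_2^d,\|\cdot\|_2)\asymp\sqrt{\tr(\Sigma)}$, produces the additional effective-rank contribution $c\sqrt{\tr(\Sigma)/N}$ and closes the uniform bound.

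For the matching lower bound, the idea is to combine three standard Le Cam two-point constructions: the rate $\sqrt{\tr(\Sigma)/N}$ already present without contamination (comparing two product Gaussians); the rate $\sqrt{\|\Sigma\|\log(1/\delta)/N}$ forced by the $1-\delta$ confidence requirement along the top eigenvector of $\Sigma$; and the rate $\eps\sqrt{\|\Sigma\|}$ obtained by exhibiting two Gaussians whose means differ by $\eps\sqrt{\|\Sigma\|}$ along the top eigenvector but whose total variation is at most $\eps$, so that they become statistically indistinguishable under strong contamination. The dominant difficulty throughout lies in the chaining step of the third paragraph, and this is precisely what makes the sharp concentration inequalities for central order statistics of Gaussian, folded normal, and chi-squared distributions a necessary analytic ingredient rather than a cosmetic one.
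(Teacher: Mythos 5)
Your framework (a directional selector $T_v$, certified set $\mathcal{S}$, and triangle inequality) is essentially the same skeleton the paper uses, and your pointwise analysis is correct: the paper's Lemma~\ref{lem:subGquantiles} gives exactly the sub-Gaussian concentration of the Gaussian sample median you invoke, and the ``at most $\eps$-quantile shift'' argument (sandwiching the contaminated median between the $1/2\pm\eps$ quantiles of the clean sample) is precisely how the paper handles the adversary. Your lower-bound sketch is also the right one.

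The genuine gap is the uniformity step, and I do not think the chaining route as you describe it can be repaired to give $\sqrt{\tr(\Sigma)/N}$. The obstruction is not the discontinuity of the median per se, but the \emph{metric geometry} of the process $v\mapsto T_v - v^\top\mu$. Its Bahadur/influence-function linearization is an empirical process indexed by half-spaces, $\frac{1}{N}\sum_i\bigl(\ind{\langle X_i,v\rangle>v^\top\mu}-\tfrac12\bigr)$, and the natural increment metric of an indicator process is of Hellinger type: $\mathrm{Var}\bigl(\ind{\langle X,v\rangle>t}-\ind{\langle X,v'\rangle>t}\bigr)$ scales like the probability of the symmetric difference, \emph{not} like $\|\Sigma^{1/2}(v-v')\|^2$. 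That square-root scaling is exactly what forces VC/Dudley-type arguments to produce $\sqrt{d}$ rather than $\sqrt{\tr(\Sigma)}$; replacing Dudley by Talagrand's majorizing-measure bound does not change this, because the loss is in the metric, not in the entropy integral. Your proposed fix, a Lipschitz $L$-estimator in a central window, does not help either: the order-statistic functional $v\mapsto \langle X_{(k)}(v),v\rangle$ has Lipschitz constant of order $\max_i\|X_i\|\asymp\sqrt{\tr(\Sigma)}$ in $\|v-v'\|$, not $\|\Sigma^{1/2}(v-v')\|/\sqrt N$, so the increment bound you need is off by a factor of order $\sqrt{N\,\mathbf{r}(\Sigma)}$. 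The paper sidesteps all of this by smoothing the estimator itself: it does not take the median of the projections $\langle X_i,v\rangle$, but the \emph{posterior-averaged} median $\E_{\rho_v}\operatorname{Med}(\langle X_i,\theta\rangle)$ with $\rho_v=\gaussdist(v,\beta^{-1}I_d)$, and then applies the PAC-Bayesian (Donsker--Varadhan) lemma. This replaces the supremum over $v$ by an expectation over $\rho_v$, controlled uniformly by the KL term $\mathcal{KL}(\rho_v,\gamma)=\beta/2\asymp\mathbf{r}(\Sigma)$, which is precisely the Gaussian complexity of the ellipsoid; all that is then needed is a conditional (given $\theta$) MGF bound for the median, i.e.\ exactly the pointwise concentration you already have. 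No process-level increment bound is ever required. Without this change of estimator and of uniformization tool, the $\sqrt{\tr(\Sigma)/N}$ term does not come out of your argument.
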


Although the generality of our result is its main strength, we focus for a moment on the \emph{isotropic case}, that is $\Sigma = I_d$. In this case, if the adversary corrupts at most $O(\sqrt{dN})$ elements of the sample, we still get the optimal performance \eqref{eq:borelsudakovtsirelson} of the sample mean in the setup where the data is not contaminated. This dependence stands in contrast with the more usual $\sqrt{\eps}$-dependence on the contamination level that is known to be achievable under the two moments assumption \cite{lugosi2021robust, diakonikolas2020outlier}. In particular, the latter results only allow $O(d)$ outliers to maintain the optimal performance.

Another interesting aspect of our analysis is that we do not make any assumptions on the sample size. In comparison, the existing estimators that recover the optimal bound in the isotropic case \cite{chen2018robust, diakonikolas2019recent} require $N \ge c(d + \log(1/\delta))$, or $N \ge cd\eps^{-2}$ as in \cite{depersin2021robustness}, where $c > 0$ is some absolute constant. We will encounter some weaker assumptions in Section \ref{sec:tuning}, but only when tuning a single real-valued parameter of our estimator. 

In the context of mean estimation of anisotropic sub-Gaussian distributions, the sharpest known bound is due to Lugosi and Mendelson \cite{lugosi2021robust}. These authors proposed a multivariate version of a trimmed mean estimator that achieves the following error rate 
\begin{equation}
\label{eq:extralog}
\|\widehat{\mu}_{\textrm{LM}} - \mu\|_2 \le c \sqrt{\|\Sigma\|}\left(\sqrt{\frac{\mathbf{r}(\Sigma)}{N}} + \sqrt{\frac{\log(1/\delta)}{N}} + \eps\sqrt{\log\left(\frac{1}{\eps} \right)}\right).
\end{equation} 
The same rate has also been provided by Dalalyan and the first author of this paper \cite{dalalyan2022all} for a different estimator under an additional assumption that $\Sigma$ is known, though using a computationally efficient algorithm.  One may think that the presence of an additional $\sqrt{\log(1/\eps)}$ term is an artifact of the analysis in \cite{lugosi2021robust} and \cite{dalalyan2022all}. This is in fact not true, and the presence of this term is known to be necessary for sub-Gaussian distributions \cite{cheng2019high, lugosi2021robust}. By trimming the observations as in \cite{lugosi2021robust} one can lose some of the specific properties of the Gaussian distribution. Thus, our result asks for both a different estimator and a different analysis. The presence of an additional $\sqrt{\log(1/\eps)}$ term is also interesting from the computational perspective. In particular, the authors of \cite{diakonikolas2017statistical} argue that for any polynomial time Statistical Query algorithm the factor $\sqrt{\log(1/\eps)}$ is unavoidable in the error bound (see also \cite{hopkins2019hard}). We note that in the rich literature on robust statistics, a number of polynomial running time estimators were proposed with $\eps \sqrt{\log(1/\eps)}$ dependence on the contamination level \cite{LaiRV16, DiakonikolasKK017, diakonikolas2019recent, dalalyan2022all, Bateni2022NearlyMR}. 

It is worth mentioning that there are several results showing that the linear dependence on $\eps$ can be achieved in the isotropic case, where $\Sigma$ is identity (or close to it). We refer to the analysis of the Tukey median, the direction-dependent median as well as the Stahel-Donoho median of means in respectively \cite{chen2018robust, diakonikolas2019recent, depersin2021robustness}. Apart from the fact that the isotropic assumption is quite restrictive when working with real data, the presence of VC-type or sphere covering arguments is inherent to the analysis of existing dimension-dependent estimators. Unfortunately, these arguments cannot help us to prove the dimension-free bound of Theorem \ref{thm:maintheorem}. 
It is well understood that the bound on the Gaussian complexity of ellipsoids, which corresponds to the $\sqrt{\tr(\Sigma)}$ term in our result, follows neither from the ellipsoid covering, nor from the Dudley integral, nor from VC-type arguments. This feature is especially pronounced in the Gaussian covariance estimation problem discussed in Section \ref{sec:covariance}. We refer to Chapter 2.5 in the monograph of Talagrand \cite{Talagrand2014} for a thorough analysis of these questions. 

The starting point of our analysis is the folklore property of the sample median of the Gaussian distribution in the one-dimensional case. We denote the sample median by $\operatorname{Med}(\cdot)$ in what follows. If $X_1, \ldots, X_{N}$ is an $\eps$-contaminated sample of independent standard Gaussians with mean $\mu$ and variance $\sigma^2$, then, with probability at least $1 - \delta$, 
\[
\left|\operatorname{Med}(X_1, \ldots, X_{N}) - \mu\right| \le c\sigma\left(\sqrt{\frac{\log(1/\delta)}{N}} + \eps\right),
\]
whenever $N \ge c\log(1/\delta)$ and $\eps$ is smaller than some absolute constant. When going to higher dimensions, instead of working with Tukey's median, whose sharp analysis is only known in the isotropic case \cite{chen2018robust}, or Stahel-Donoho-type estimators as in \cite{depersin2021robustness}, we base our solution on what we call the \emph{smoothed median estimator}. 

\begin{framed}
Let $x_1, \ldots, x_N$ be a set of vectors in $\mathbb{R}^d$, and let $\xi = (\xi_1, \ldots, \xi_N)$ be a zero mean random vector in $\mathbb{R}^N$ whose covariance matrix $H$ is given by $H_{i, j} = \beta^{-1}\langle x_i, x_j\rangle$ for $i, j = 1, \ldots, N$. That is, $H$ is proportional to the Gram matrix of the original data. Here $\beta > 0$ is any integer (chosen by the statistician) satisfying $\mathbf{r}(\Sigma)/10  \le \beta \le 10 \mathbf{r}(\Sigma)$. 
For any direction $v \in S^{d - 1}$, we are interested in the following quantity we call the \emph{smoothed median}:
\begin{equation}
\label{eq:smoothmed}
\operatorname{SmoothMed}_v(x_1, \ldots, x_{N}) = \E_{\xi}\operatorname{Med}(\langle x_1, v\rangle + \xi_1, \ldots, \langle x_{N}, v\rangle + \xi_N).
\end{equation}
Observe that $\operatorname{SmoothMed}_v(x_1, \ldots, x_{N})$ is a function of $x_1, \ldots, x_{N}$ and $v$. The estimator of Theorem \ref{thm:maintheorem} has a simple form. Given an $\eps$-contaminated sample $X_1, \ldots, X_N$, we set
\begin{equation}
    \label{eq:ourestimator}
    \widehat{\mu} = \argmin_{\nu \in \mathbb{R}^{d}}\sup\limits_{v \in S^{d - 1}}\left|\operatorname{SmoothMed}_v(X_1, \ldots, X_{N}) - \langle \nu, v\rangle\right|.
\end{equation}
\end{framed}

\begin{remark}
In Section \ref{sec:tuning}, we discuss how one may choose an integer $\beta$ satisfying $\mathbf{r}(\Sigma)/10  \le \beta \le 10 \mathbf{r}(\Sigma)$ based only on the contaminated sample. Importantly, we avoid a sample-splitting approach when tuning this parameter.
\end{remark}
From the practical perspective, our estimator has complexity exponential in dimension. This is a typical limitation for all existing estimators that have a linear dependence on the contamination level $\eps$ in the Gaussian case\footnote{Recall that existing estimators of this kind lead to dimension-dependent bounds.}. However, in the case when the dimension $d$ is small enough, one can replace the computation over the sphere $S^{d - 1}$ by an appropriate $\varepsilon$-net and approximate the smoothing integration uniformly over all the elements of this net using a Monte Carlo sampling technique. 

The appearance of the smoothed median follows from the proof technique that guarantees a dimension-free nature of our bound. Our approach uses the so-called \emph{PAC-Bayesian lemma}, whose applications were pioneered by O. Catoni and co-authors \cite{audibert2011robust, catoni2016pac, catoni2017dimension} in the context of mean/covariance estimation/linear regression in the heavy-tailed setup. Our application further develops these techniques but in the context of adversarial contamination. An additional discussion appears in Section \ref{sec:techresults}.
\paragraph{Notation.} Throughout the text $c, c_1, c_2, \ldots$ denote absolute constants that may change from line to line. For two positive semi-definite matrices $A$ and $B$ we write $A \preceq B$, if $B - A$ is positive semi-definite. The symbol $\|\cdot\|$ denotes the operator norm of a matrix or the Euclidean norm of a vector depending on the context. Let $\mathbb{S}_{+}^d$ denote the set of $d\times d$ positive semi-definite matrices. The symbol $I_d$ denotes the identity $d \times d $ matrix. We denote the indicator of the event $A$ by $\ind{A}$. For any integer $N$, $[N]$ is the shortened notation of the set $\{1, \ldots, N\}$. For a random variable $Y$ and $\alpha \in [1, 2]$, its $\psi_{\alpha}$ Orlicz norm is defined as follows
\[
\|Y\|_{\psi_{\alpha}} = \inf\{c > 0: \E\exp(|Y|^\alpha/c^{\alpha}) \le 2\}.
\]
Using the standard convention, we say that $\|\cdot\|_{\psi_{2}}$ is the sub-Gaussian norm and $\|\cdot\|_{\psi_1}$ is the sub-exponential norm.   Let $ \mathcal{KL}(\rho, \gamma) = \int\log\left(\frac{d\rho}{d\gamma}\right)d\rho$
denote the Kullback-Leibler divergence between the two measures $\rho$ and $\gamma$ such that $\rho \ll \gamma$. The notation $\rho \ll \gamma$ means that the measure $\rho$ is absolutely continuous with respect to the measure $\gamma$. 
We define the order statistics. Given a set of real numbers $x_1, \ldots, x_n$, let $x_{(1)}, \ldots, x_{(n)}$ denote their non-decreasing rearrangement. That is, $x_{(1)} \le x_{(2)} \le \ldots \le x_{(n)}$. For $\alpha \in [0, 1]$, assuming that $\alpha n$ is an integer, we set
\[
\operatorname{Quant}_{\alpha}(x_1, \ldots, x_n) = x_{(\alpha n)}.
\]
In particular, assuming for simplicity that $n$ is odd, the sample median is given by
\[
\operatorname{Med}(x_1, \ldots, x_n) = x_{((n + 1)/2)}.
\]

\paragraph{Related literature.} Robust statistics is a well-developed topic with several explanatory texts published recently. In our context, the most relevant are the surveys of Lugosi and Mendelson \cite{lugosi2019mean}, and of Diakonikolas and Kane \cite{diakonikolas2019recent}. Some classical references on robust statistics include the textbooks \cite{hampel1980robust,huber1981robust, rousseeuw1987robust}.  When discussing covariance estimation, we refer to the survey \cite{ke2019user} where the focus is on heavy-tailed distributions. We also mention several recent papers on covariance estimation \cite{chen2018robust, minsker2022robust, abdalla2022covariance, oliveira2022improved}, where the focus is on adversarial contamination.

Instead of working with the Euclidean norm as in Theorem \ref{thm:maintheorem}, some authors focus on the Mahalanobis norm. That is, one aims to construct an estimator $\widehat{\mu}$ such that $(\widehat{\mu} - \mu)^{\top}\Sigma^{-1}(\widehat{\mu} - \mu)$ is small with high probability. It appears that the bounds with respect to this norm are necessarily dimension-dependent, and a simple VC-type/sphere covering argument is sufficient to obtain the optimal rates of convergence \cite{depersin2021robustness}. Similar observations are also valid for the covariance estimation problem. We focus on the operator norm, where the analysis allows for dimension-free bounds. This is not the case for the (weighted) Frobenius norm commonly analyzed in the literature.

\section{Covariance estimation}
\label{sec:covariance}
We now move to a more challenging problem of covariance estimation. For simplicity, we assume that our uncontaminated distribution is zero mean. We first need to present a sharp analog of inequality \eqref{eq:borelsudakovtsirelson} in the case where no contamination is allowed. Such a result has been shown only recently by Koltchinskii and Lounici \cite{koltchinskii2017operators}. Their analysis is based on the generic chaining for quadratic processes. This non-trivial approach is motivated by the difficulty of replacing $d$ with the effective rank $\mathbf{r}(\Sigma)$. Let us formulate their result. Assume that $Y_1, \ldots, Y_N$ are independent zero mean Gaussian vectors in $\mathbb{R}^d$ with covariance $\Sigma$. There are absolute constants $c_1, c_2 > 0$ such that, with probability at least $1 - \delta$,
\begin{equation}
\label{eq:samplecov}
\left\|\frac{1}{N}\sum\limits_{i = 1}^NY_iY_i^{\top} - \Sigma\right\| \le c_1\|\Sigma\|\left(\sqrt{\frac{\mathbf{r}(\Sigma)}{N}} + \sqrt{\frac{\log(1/\delta)}{N}}\right),
\end{equation}
provided that $N \ge c_2(\mathbf{r}(\Sigma) + \log(1/\delta))$. When adversarial contamination is allowed, the sharpest known dimension-free result is implied by the bound of Abdalla and the second author of this paper \cite{abdalla2022covariance}. Their work suggests a trimmed-mean-based estimator that achieves the rate
\begin{equation}
\label{eq:azh}
\left\|\widehat{\Sigma}_{\textrm{AZh}} - \Sigma\right\| \le c_1\|\Sigma\|\left(\sqrt{\frac{\mathbf{r}(\Sigma)}{N}} + \sqrt{\frac{\log(1/\delta)}{N}} + \eps\log\left(\frac{1}{\eps}\right)\right),
\end{equation}
whenever $N \ge c_2(\mathbf{r}(\Sigma) + \log(1/\delta))$. 
The above bound is valid for any sub-Gaussian distribution and cannot be improved in general. However, similarly to the case of mean estimation, we expect a better dependence on the contamination parameter $\eps$ in the Gaussian case. We also note that simpler versions of the bound \eqref{eq:azh} based on the median-of-means estimators \cite{mendelson2020robust} only lead to a $\sqrt{\eps}$-dependence on the contamination level.
\begin{remark}
As a side note, when aiming for a sharp leading constant in \eqref{eq:samplecov} in the uncontaminated setup, an almost optimal performance follows from the recent result of Han \cite{han2022exact} combined with the second order concentration inequality derived in \cite{klochkov2020uniform}. A similar bound in the sub-Gaussian case with explicit constants can be found in \cite{zhivotovskiy2021dimension}.
\end{remark}

In our analysis, we first make some additional assumptions. Our estimator depends on some additional parameters that could be pre-estimated based only on the observed $\varepsilon$-contaminated sample. A careful analysis of pre-estimation procedures is deferred to Section \ref{sec:tuning}. For the rest of this section, we assume that we have an access to the following quantities.
\begin{enumerate}
\item There is an integer $\beta$ and a real number $\omega$ satisfying respectively 
\[
 \frac{1}{10}\mathbf{r}(\Sigma) \le \beta \le 10\mathbf{r}(\Sigma), \quad \text{and}\quad \frac{1}{10}\|\Sigma\| \le \omega \le 10\|\Sigma\|.
\]
\item Let $H$ be a known positive semi-definite matrix. Assume that we know a real number $\alpha = \alpha(H)$ satisfying
\begin{equation}
\label{eq:alphah}
|\alpha - \tr(\Sigma H)| \le c\tr(\Sigma H)\left(\sqrt{\frac{\mathbf{r}(\Sigma) + \log(1/\delta)}{N}} + \eps\right),
\end{equation}
for some absolute constant $c > 0$. 
\item We have an access to a positive semi-definite matrix $G$ satisfying 
\begin{equation}
\label{eq:gmatrix}
\frac{1}{10}\Sigma \preceq G, \quad \textrm{and}\quad \tr(G) \le 10\tr(\Sigma).
\end{equation}
\end{enumerate}
Except for the matrix $G$, we only need to tune real valued parameters.  This can be usually done under the minimal assumption $N \ge c(\mathbf{r}(\Sigma) + \log(1/\delta))$ for some absolute constant $c > 0$. Observe that the assumption $\Sigma \preceq 10G$ does not imply that $\Sigma$ is close to $G$ in the operator norm. At the same time, this assumption requires some control over the smallest singular value of $\Sigma$. We show, in particular, that whenever $N \ge c(d + \log(1/\delta))$, we can always efficiently construct such a matrix $G$ based on contaminated data, while still maintaining the dimension-free nature of our upper bound. Moreover, it appears that our guarantees are uniform with respect to the choice of the matrix $G$. One can rerun our estimator on the same data multiple times with any admissible $G$ satisfying \eqref{eq:gmatrix} without affecting the performance of our estimator. We discuss this formally in Section \ref{sec:tuning}. 
\begin{theorem}[Robust covariance estimation in the Gaussian case]
\label{thm:covarianceestimation}
There are absolute constants $c_1, c_2 > 0$ such that the following holds. Assume that $X_{1}, \ldots, X_{N}$ is an $\eps$-contaminated sample of zero mean  Gaussian vectors in $\mathbb{R}^d$ with covariance $\Sigma$. Let $\eps < c_1$, then there is an estimator $\widehat{\Sigma} = \widehat{\Sigma}_{\alpha, \beta, \omega, G, \eps}(X_1, \ldots, X_N)$ satisfying, with probability at least $1 - \delta$,
\[
\left\|\widehat{\Sigma} - \Sigma\right\| \le c_2\|\Sigma\|\left(\sqrt{\frac{\mathbf{r}(\Sigma)}{N}} + \sqrt{\frac{\log(1/\delta)}{N}} + \eps\right).
\]
Up to multiplicative constant factors, no estimator of the covariance matrix performs better. 
\end{theorem}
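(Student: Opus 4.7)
The plan is to mirror the template of Theorem \ref{thm:maintheorem}, replacing the per-direction median of linear projections by a robust estimate of the quadratic functional $v^{\top}\Sigma v$, and then wrapping the resulting family of one-dimensional estimators in a PAC-Bayesian uniform bound over $v\in S^{d-1}$. The starting point is the identity $\|\widehat{\Sigma}-\Sigma\|=\sup_{v\in S^{d-1}}\bigl|v^{\top}(\widehat{\Sigma}-\Sigma)v\bigr|$, which reduces the task to estimating $v^{\top}\Sigma v$ simultaneously well for all $v$. For a fixed direction $v$, the clean variables $\langle X_{i},v\rangle^{2}$ are iid $(v^{\top}\Sigma v)\cdot\chi^{2}(1)$, so any fixed level $\alpha_{0}\in(0,1)$ quantile of the $\langle X_{i},v\rangle^{2}$ is, up to a known deterministic multiple $c_{0}=c_{0}(\alpha_{0})$, an unbiased proxy for $v^{\top}\Sigma v$, and under $\varepsilon$-contamination the quantile can move by at most $O(\varepsilon\,v^{\top}\Sigma v)\le O(\varepsilon\|\Sigma\|)$ plus a $O(\|\Sigma\|\sqrt{\log(1/\delta)/N})$ fluctuation. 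This is the chi-squared order statistics concentration that the authors announce in the abstract, and I would obtain it from an explicit Bernstein/DKW-type argument applied in the transformed scale of the chi-squared CDF.

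Second, I would smooth this quantile exactly as in \eqref{eq:smoothmed} to make it Lipschitz in $v$ with a slope tuned to the Gaussian complexity of $\Sigma$. Concretely, with a noise vector $\xi=(\xi_{1},\dots,\xi_{N})$ whose covariance is proportional to the Gram matrix $(\langle X_{i},X_{j}\rangle^{2})_{i,j}$ rescaled by $\beta^{-1}$ (so that the noise level on $\langle X_{i},v\rangle^{2}$ has order $(v^{\top}\Sigma v)^{2}\,\mathbf{r}(\Sigma)/\beta$ after matching the second moment of the quadratic projections), set
\[
\operatorname{SmoothQuant}_{v}(X_{1},\dots,X_{N})=\E_{\xi}\operatorname{Quant}_{\alpha_{0}}\bigl(\langle X_{1},v\rangle^{2}+\xi_{1},\dots,\langle X_{N},v\rangle^{2}+\xi_{N}\bigr),
\]
and define the estimator in complete analogy with \eqref{eq:ourestimator}:
\[
\widehat{\Sigma}=\argmin_{M\in\mathbb{S}_{+}^{d}}\sup_{v\in S^{d-1}}\bigl|\operatorname{SmoothQuant}_{v}(X_{1},\dots,X_{N})-c_{0}\,v^{\top}Mv-b(v;\alpha,\omega)\bigr|,
\]
where $b(v;\alpha,\omega)$ is a bias correction built from the pre-estimated quantities $\alpha,\beta,\omega,G$ that absorbs the mean of the smoothing noise on the quadratic scale. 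The minimum of the supremum is at most twice the error of the truth $\Sigma$ (by the triangle inequality), and this reduces everything to a uniform control of $\operatorname{SmoothQuant}_{v}-c_{0}v^{\top}\Sigma v$.

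Third, I would produce this uniform bound via the PAC-Bayesian change-of-measure lemma, parameterizing directions through symmetric rank-one matrices $vv^{\top}$ and choosing a Gaussian posterior on $\mathbb{R}^{d}$ centered at $v$ with covariance calibrated to $G$. Under the pre-estimated constraints $\Sigma\preceq 10G$ and $\tr(G)\le10\tr(\Sigma)$, the KL term in the PAC-Bayes inequality produces exactly the effective-rank complexity $\mathbf{r}(\Sigma)+\log(1/\delta)$; the smoothing by $\xi$ makes $v\mapsto\operatorname{SmoothQuant}_{v}$ sufficiently regular for the integration against the posterior to produce quadratic-in-$v$ functionals matching $v^{\top}\Sigma v$; and the parameter $\omega$ rescales everything so that the error is measured in the natural $\|\Sigma\|$-scale. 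Combining with the per-direction quantile concentration from the first step, summed against the Gaussian posterior, yields the claimed rate
\[
\|\widehat{\Sigma}-\Sigma\|\lesssim\|\Sigma\|\left(\sqrt{\frac{\mathbf{r}(\Sigma)}{N}}+\sqrt{\frac{\log(1/\delta)}{N}}+\varepsilon\right).
\]
Matching lower bounds follow from two-point Le Cam arguments: the first two terms are the optimal uncontaminated rate of Koltchinskii--Lounici \eqref{eq:samplecov}, and the $\varepsilon\|\Sigma\|$ term comes from an adversary that rescales the data along the top eigenvector of $\Sigma$, producing a $\Sigma'=(1+\Theta(\varepsilon))\Sigma$ that is within total variation $\varepsilon$ of the original Gaussian.

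The main obstacle is the PAC-Bayesian step, which is substantially harder than in Theorem \ref{thm:maintheorem} because the target functional $v\mapsto v^{\top}\Sigma v$ is quadratic rather than linear. A Gaussian posterior on $v$ integrates a linear functional exactly to a linear functional of the mean, but integrates $v^{\top}\Sigma v$ to $\bar v^{\top}\Sigma\bar v+\tr(\Sigma\,\mathrm{Cov})$, and controlling the trace term without losing a factor of dimension is exactly where the assumption $\Sigma\preceq 10G$ with $\tr(G)\le 10\tr(\Sigma)$ becomes essential. Relatedly, the Lipschitz estimate for $\operatorname{SmoothQuant}_{v}$ must be derived with respect to the $\Sigma$-weighted geometry (not the Euclidean geometry of $S^{d-1}$), which is what forces the smoothing noise covariance to depend on the Gram matrix of the data; getting this calibration to cancel cleanly against the KL divergence, uniformly in the choice of $G$, is the delicate technical heart of the argument.
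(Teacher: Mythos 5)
Your high-level template is right --- PAC-Bayes change of measure over a Gaussian posterior $\rho_v$ centered at $v$, with the constraints on $G$ absorbing the trace term and the KL divergence producing $\mathbf{r}(\Sigma)+\log(1/\delta)$ --- and you correctly identify the trace term $\tr(\Sigma\,\mathrm{Cov})$ as the delicate spot. But the core of your construction is on the wrong scale, and this is not a cosmetic difference.

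You propose to estimate $v^{\top}\Sigma v$ by quantiles of $\langle X_i,v\rangle^2$, which are $\chi^2_1$. As the paper's own Lemma \ref{lem:orderstatofchisq} shows, central $\chi^2_1$ quantiles concentrate only at the sub-exponential rate $\|\cdot\|_{\psi_1}\lesssim 1/\sqrt{N}$, not the sub-Gaussian rate. In the PAC-Bayes step you must control $\E_{\rho_v}\log\E\exp(\lambda\cdot(\text{scaling})\cdot Q)$; with a sub-exponential $Q$ the $O(\lambda^2/N)$ MGF bound holds only for $|\lambda|\lesssim\sqrt{N}$, while the $\lambda$ that balances against $\mathcal{KL}(\rho_v,\gamma)+\log(1/\delta)\asymp\mathbf{r}(\Sigma)+\log(1/\delta)$ exceeds this range unless $N\gtrsim(\mathbf{r}(\Sigma)+\log(1/\delta))^2$. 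You would therefore not recover the theorem as stated. The paper avoids this by working on the square-root scale: the estimator compares $\operatorname{Med}(|\langle X_1,\theta\rangle|,\dots,|\langle X_N,\theta\rangle|)$ to $\Phi^{-1}(3/4)\sqrt{\theta^{\top}\Gamma\theta}$, so the relevant one-dimensional distribution is half-normal, which has \emph{sub-Gaussian} quantile concentration (Lemma \ref{lem:orderstatsofhalfnormal}). Relatedly, the smoothing is additive on $\langle X_i,\theta\rangle$ with Gram-matrix noise $\beta^{-1}\langle X_i,X_j\rangle$, exactly as in the mean case --- not on $\langle X_i,v\rangle^2$ with a squared Gram matrix, which does not correspond to any posterior integration of the form $\E_{\rho_v}[\cdot]$.

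You are also missing the algebraic device that makes the square-root scale return an operator-norm bound. The paper writes
\[
\theta^{\top}(\Sigma-\widehat{\Sigma})\theta
=\Bigl(\sqrt{\theta^{\top}\Sigma\theta}-\sqrt{\theta^{\top}\widehat{\Sigma}\theta}\Bigr)
\Bigl(\sqrt{\theta^{\top}\Sigma\theta}+\sqrt{\theta^{\top}\widehat{\Sigma}\theta}\Bigr),
\]
bounds the second factor \emph{almost surely} by $c\sqrt{\|\Sigma\|}$ using the elliptic truncation $\ind{\|G^{1/2}(\theta-v)\|\le 100\sqrt{\omega}}$ of the posterior (which also costs only $\log 2$ extra in the KL term, Lemma \ref{lem:kllemma}), and estimates the first factor via the median absolute deviation. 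Your proposal has no analogue of the truncation or this factorization, and your ``bias correction'' $b(v;\alpha,\omega)$ subtracted from the objective does not play the same role as the paper's constraint set $\mathcal{H}$, which forces $\tr(\widehat{\Sigma}H)$ and $\tr(\Sigma H)$ to be within $O(\tr(\Sigma H)(\sqrt{(\mathbf{r}(\Sigma)+\log(1/\delta))/N}+\eps))$ of the \emph{same} pre-estimated $\alpha(H)$ and thereby of each other. In short: right template, wrong probability distribution for the quantiles, and the central factorization/truncation mechanism is absent.
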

We are now ready to define our estimator. 
\begin{framed}
We first construct the following distribution. For any $v \in S^{d - 1}$, let $\rho_{v}$ be a distribution in $\mathbb{R}^d$ whose density $f_{v}$ is given by
\[
f_{v}(x) = \frac{1}{p(2\pi\beta^{-1})^{d/2}}\exp\left(-\frac{\beta\|x - v\|^2}{2}\right)\ind{\|G^{1/2}(x - v)\| \le 100\sqrt{\omega}}.
\]
Here $p > 0$ is a normalization factor.  Assume that $\theta$ is a random vector distributed according to $\rho_{v}$. Let $H = \E_{\rho_v}(\theta - v)(\theta - v)^{\top}$ be a covariance matrix of $\rho_{v}$ (by the symmetry of $\rho_v$ around $v$, the matrix $H$ does not depend on $v$). For some specifically chosen absolute constant $c > 0$ and $\alpha = \alpha(H)$, define the set 
\begin{equation}
\label{eq:setH}
\mathcal{H} = \Bigg\{\Gamma\in \mathbb{S}_{+}^d: |\tr(\Gamma H) -\alpha| \le c\tr(\Gamma H)\Bigg(\sqrt{\frac{\mathbf{r}(G) + \log({1}/{\delta})}{N}} + \eps\Bigg);\Gamma \preceq 10G; \|\Gamma\| \le 10\omega \Bigg\}.
\end{equation}
For an $\eps$-contaminated sample $X_1, \dots, X_N$, our estimator is defined as follows:
\begin{equation}
    \label{eq:Cov-estimator}
    \widehat{\Sigma} = \argmin_{\Gamma \in \mathcal{H}}\sup\limits_{v \in S^{d - 1}}\E_{\rho_{v}}\left|\operatorname{Med}\left(|\langle X_1, \theta\rangle|, \ldots, |\langle X_N, \theta\rangle|\right) - \Phi^{-1}(3/4)\sqrt{\theta^{\top}\Gamma\theta}\right|.
\end{equation}
\end{framed}

This estimator is a more complex version of our smoothed median estimator. 
First, instead of working with the Gaussian smoothing measure, we restrict this distribution to an elliptic set $\{x \in \mathbb{R}^d: \|G^{1/2}(x - v)\| \le 100\sqrt{\omega}\}$. Second, we need to restrict the eigenvalues of the output matrix and introduce the set $\mathcal{H}$. Our estimator is related to minimizing the so-called \emph{median absolute deviation} (see \cite{donoho1992breakdown} for related definitions). The proof of Theorem \ref{thm:covarianceestimation} exploits the fact that quantiles of $|\langle X, \theta\rangle|$ are tightly connected with corresponding variances. This is reflected in the term $\Phi^{-1}(3/4)$ appearing in the definition of our estimator.

\section{Concentration inequalities for sample quantiles}

In this section, we obtain sub-Gaussian and sub-exponential concentration inequalities for quantiles of i.i.d. observations sampled according to several regular distributions. The analysis of sample quantiles is a standard question in statistics. The early work of Kolmogorov \cite{kolmogorov31} focused on proving a central limit theorem for the sample median of some symmetric distributions. Subsequently, the focus was on explicit expansions for this limit law \cite{burnashev1997asymptotic}. Another line of research focused on studying the explicit formulas for the distribution of order statistics. We refer to the monograph of David and Nagaraja \cite{david2004order} on order statistics and to the monograph of De Haan and Ferreira \cite{de2007extreme} on the extreme value theory, a topic covering the properties of smallest and largest values in samples. Finally, many authors focused on the analysis of Bahadur's representation of sample quantiles (see e.g., \cite{bahadur1966note, kiefer1967bahadur}, \cite[Theorem 5.11]{shao2003mathematical}). Unfortunately, neither the exact expressions for the distribution of sample quantiles nor various asymptotic expansions lead to the exact concentration inequalities we are interested in. 

Less is known about concentration inequalities for sample quantiles. Some explicit non-asymptotic bounds appear in the monograph of Shao \cite[Section 5.3]{shao2003mathematical}, where a reduction to a concentration of Bernoulli random variables is made. Several related concentration inequalities appear in the work of Boucheron and Thomas \cite{boucheron2012concentration}, though their bounds are not sharp enough for our purposes. In particular, these authors provide a sub-exponential concentration inequality for the sample median of the Gaussian distribution, while our results will lead to sub-Gaussian concentration inequalities. More recent results on sample quantiles are inspired by the problems in robust statistics. In fact, the analysis in \cite{chen2018robust, altschuler2019best} provides some sharp bounds for sample quantiles, though the existing bounds do imply the sub-Gaussian concentration only for small enough deviations from the mean. Another line of results is due to Bobkov and Ledoux \cite{bobkov2019one}. Their results provide sharp concentration inequalities for log-concave distributions (recall that the Gaussian distribution is log-concave), but only lead to sub-exponential tails due to their generality\footnote{For the special case of the uniform distribution on the real line Bobkov and Ledoux \cite{bobkov2019one} provide a sub-Gaussian concentration inequality for all order statistics of the uniform distribution in $[0, 1]$.}.

Our approach is quite simple, though, to the best of our knowledge, it is not used explicitly in the literature. When proving concentration inequalities for sample quantiles, we consider two regimes. For small deviations, we use the regularity of the density function and follow the reduction to a concentration of Bernoulli random variables as in \cite{shao2003mathematical, chen2018robust, altschuler2019best, diakonikolas2019recent, xia2019non}, while for large deviations we use the sub-Gaussian/sub-exponential tails of our distribution. This leads to desired sharp concentration inequalities. We discuss some straightforward extensions of our analysis in Section \ref{sec:conlcudingremarks}.

Before providing our first concentration inequality, recall that the cumulative distribution function of a standard Gaussian is denoted by $\Phi(\cdot)$. Denote its inverse by $\Phi^{-1}(\cdot)$. 

\begin{lemma}[Concentration for Gaussian quantiles]
\label{lem:subGquantiles}
There are absolute constants $c_1, c_2 > 0$ such that the following holds. Let $\eps \in [0, 1/4]$. Assume without loss of generality that $(1/2 \pm \eps)N$ are integers. Let $Y_1, \ldots, Y_N$ be a sample of independent standard Gaussian random variables.
Then, for any $t \ge 0$,
    \[
       \Pr(|Y_{((1/2 \pm \eps)N)} - \Phi^{-1}(1/2 \pm \eps)|\ge t) \le 2\exp(-c_1 Nt^2).
    \]
Equivalently, 
\[
\left\|Y_{((1/2 \pm \eps)N)} - \Phi^{-1}(1/2 \pm \eps)\right\|_{\psi_2} \le \frac{c_2}{\sqrt{N}}.
\]
\end{lemma}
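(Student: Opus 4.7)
The plan is to split the tail bound into a small-deviation regime and a large-deviation regime, and in each regime reduce to a binomial concentration. Write $q_\pm = \Phi^{-1}(1/2 \pm \eps)$ and $k_\pm = (1/2 \pm \eps)N$. Since $\eps \in [0, 1/4]$, the quantile $q_\pm$ lies in the bounded interval $[\Phi^{-1}(1/4),\Phi^{-1}(3/4)]$, so the density $\phi$ is bounded above and away from $0$ by absolute constants on a bounded neighborhood of $q_\pm$. The key reduction I would use is the standard identity $\{Y_{(k)} \le x\} = \{\#\{i : Y_i \le x\} \ge k\}$, which turns the tails of $Y_{(k_\pm)}$ into tails of $\binomdist(N, \Phi(q_\pm \pm t))$.

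In the small-deviation regime $0 \le t \le t_0$ (for some absolute constant $t_0$), the mean value theorem together with the lower bound on $\phi$ near $q_\pm$ gives $|\Phi(q_\pm + t) - \Phi(q_\pm)| \ge c_0 t$. Consequently the binomial parameter differs from the target count $k_\pm$ by at least $c_0 N t$, and Bernstein's inequality yields
\[
\Pr\!\bigl(|Y_{(k_\pm)} - q_\pm| \ge t\bigr) \;\le\; 2 \exp(-c_1 N t^2),
\]
which is the regime already used in \cite{shao2003mathematical, chen2018robust, altschuler2019best}.

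In the large-deviation regime $t > t_0$, I would switch to the Gaussian tail. The event $\{Y_{(k_+)} > q_+ + t\}$ is equivalent to $\#\{i : Y_i > q_+ + t\} \ge N - k_+ + 1 \ge N/4$, and with $p = 1 - \Phi(q_+ + t) \le \exp(-(q_+ + t)^2/2)$ the Chernoff bound for the binomial gives
\[
\Pr\bigl(\binomdist(N, p) \ge N/4\bigr) \;\le\; \exp\!\bigl(-N\,\mathrm{kl}(1/4, p)\bigr).
\]
A direct computation of $\mathrm{kl}(1/4,p) = (1/4)\log(1/(4p)) + (3/4)\log(3/(4(1-p)))$ produces the lower bound $\mathrm{kl}(1/4,p) \ge t^2/8 - O(1)$, hence $\ge c_2 t^2$ once $t_0$ is chosen large enough. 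The lower tail at $q_+$ and the two tails at $q_-$ follow by the symmetry $Y_i \overset{d}{=} -Y_i$ and an analogous argument with the roles of the two halves of the sample interchanged. Combining the two regimes and enlarging the constant to cover the boundary $t \approx t_0$ yields the stated bound, and the $\psi_2$ estimate follows from the standard tail-to-Orlicz equivalence.

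The main obstacle, and the reason this is not immediate from the literature, is getting sub-Gaussian rather than sub-exponential scaling in the large-deviation regime. A naive application of Bernstein to the binomial in the first regime only yields $\exp(-cNt)$ for large $t$, because the linearization $\Phi(q+t) - \Phi(q) \approx t\phi(q)$ breaks down and the Bernstein denominator is dominated by the linear term, as in \cite{boucheron2012concentration}. The resolution is conceptual: the event $Y_{(k_\pm)} > q_\pm + t$ requires not a single sample point but a positive fraction of the sample to exceed $q_\pm + t$, and the $\log(1/p)$ factor in Chernoff's bound applied to the Gaussian tail $p \le e^{-t^2/2}$ then produces $N$ copies of the log-tail, restoring the Gaussian scaling $e^{-cNt^2}$. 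Matching the two regimes cleanly at $t = t_0$ is a routine calibration of constants.
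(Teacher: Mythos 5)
Your proposal follows the same two-regime split as the paper's own proof: for small $t$, reduce to binomial concentration via the order-statistic/counting identity and use the bounded-away-from-zero density at $\Phi^{-1}(1/2\pm\eps)$; for large $t$, exploit the fact that the deviation event forces at least $N/4$ samples past the threshold and pay the Gaussian tail $e^{-t^2/2}$ to the power $N/4$. The paper instantiates the large-deviation step with the crude bound $\binom{N}{N/4}\Pr(Y_1\ge t)^{N/4}\le 2^N e^{-Nt^2/8}$ rather than the Chernoff-KL estimate, and uses Hoeffding rather than Bernstein in the small-deviation step, but these are cosmetic differences — the decomposition, the key conceptual observation that a positive fraction of the sample must overshoot, and the matching at the boundary are all the same.
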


\begin{proof}
We only analyze the quantile $Y_{((1/2 + \eps)N)}$, since the analysis for $Y_{((1/2 - \eps)N)}$ is the same. We analyze two parts of the tail separately. First, we show that 
\[
\Pr(Y_{((1/2 + \eps)N)} - \Phi^{-1}(1/2 + \eps)\ge t) \le \exp(-c_1 Nt^2).
\]
This analysis is also split into two regimes. For some absolute constant $C > 0$, we first show the above inequality for $0 \le t \le C$ and then proceed with the case $t \ge C$. In the first regime, we follow the standard reduction to binomial tails (see similar computations in \cite[Theorem 5.9]{shao2003mathematical} and \cite{xia2019non}). Let $Z_1, \ldots, Z_N$ be independent Bernoulli random variables with expectation $p = 1- \Phi(s)$ for some fixed $s \in \mathbb{R}$. We have 
\[
\Pr(Y_{((1/2 + \eps)N)} \ge s) = \Pr\left(\sum\nolimits_{i = 1}^N Z_i > (1/2 - \eps)N\right).
\]
We set $s = \Phi^{-1}(1/2 + \eps)+ t$ and obtain, using that $\E Z_i = 1 - \Phi(\Phi^{-1}(1/2 + \eps)+ t)$,
\begin{align*}
\Pr(Y_{((1/2 + \eps)N)} \ge \Phi^{-1}(1/2 + \eps)+ t) &= \Pr\left(\frac{1}{N}\sum\limits_{i = 1}^N Z_i - \E Z_i > \Phi(\Phi^{-1}(1/2 + \eps)+ t) - \frac{1}{2} - \eps\right).
\end{align*}
Denoting $\varphi_+(t) = \Phi(\Phi^{-1}(1/2 + \eps)+ t) - 1/2 - \eps$, we have by Hoeffding's inequality applied to independent Bernoulli random variables, whenever $\varphi_+(t) \ge 0$,
\begin{equation}
\label{eq:uppertailphi-gauss}
\Pr(Y_{((1/2 + \eps)N)} -  \Phi^{-1}(1/2 + \eps)\ge t) \le \exp\left(-2N(\varphi_+(t))^2\right).
\end{equation}
Let us lower bound the function $\varphi_+$. 
Using the density formula, we have
\begin{align*}
    \varphi_+(t) &= \Phi(\Phi^{-1}(1/2 + \eps) + t) - \Phi(\Phi^{-1}(1/2 + \eps))
    \\
    &\ge \frac{t}{\sqrt{2\pi}}\exp\left(-(\Phi^{-1}(1/2 + \eps) + t)^2/2\right)
    \\
    &\ge \frac{t}{\sqrt{2\pi}}\exp\left(-(\Phi^{-1}(3/4) + t)^2/2\right).
\end{align*}
We combine these computations with the tail for large values of $t$. Since $\eps \le 1/4$, we need that at least $N/4$ (assume that it is an integer without loss of generality) of all observations are above $\Phi^{-1}(1/2 + \eps) + t$. This can be controlled as follows
\begin{align*}
\Pr(Y_{((1/2 + \eps)N)} - \Phi^{-1}(1/2 + \eps) \ge t) &\le \binom{N}{N/4}\left(\Pr(Y_1 \ge \Phi^{-1}(1/2 + \eps) + t)\right)^{N/4}
\\
&\le 2^N\left(\Pr(Y_1 \ge  t)\right)^{N/4}
\\
&\le 2^N\exp(-Nt^2/8) 
\\
&= \exp(N\log(2) - Nt^2/8)
\\
&\le \exp(-Nt^2/16),
\end{align*}
whenever $t \ge 4\sqrt{\log(2)}$.
The inequality \eqref{eq:uppertailphi-gauss} and the lower bound on $\varphi_+(t)$ give us
\[
\Pr(Y_{((1/2 + \eps)N)} - \Phi^{-1}(1/2 + \eps) \ge t) \le \exp\left(\frac{-2Nt^2}{2\pi \exp((\Phi^{-1}(3/4)+4\sqrt{\log(2)})^{{2}})}\right),
\]
whenever $0 \le t \le 4\sqrt{\log(2)}$. Combining two regimes and adjusting the absolute constant, we prove an upper tail. Let us prove the lower tail bound. The proof is similar, though the computations are slightly different. We want to show for any $t \ge 0$,
\[
\Pr(Y_{((1/2 + \eps)N)} - \Phi^{-1}(1/2 + \eps) \le - t) \le \exp(-c_1 Nt^2).
\]
We have for any $s \in \mathbb{R}$ and $Z_1, \ldots, Z_N$ as above
\[
\Pr(Y_{((1/2 + \eps)N)} \le s) = \Pr\left(\sum\nolimits_{i = 1}^N Z_i \le (1/2 - \eps)N\right).
\]
Define
\[
\varphi_-(t) = \Phi(\Phi^{-1}(1/2 + \eps) - t) - 1/2 - \eps .
\]
We have similarly 
\[
\Pr(Y_{((1/2 + \eps)N)} - \Phi^{-1}(1/2 + \eps) \le -t) \le \exp\left(-2N(\varphi_-(t))^2\right).
\]
Now we lower bound the quantity $|\varphi_-(t)|$ as follows  
\begin{align*}
    |\varphi_-(t)| &= \Phi(\Phi^{-1}(1/2 + \eps)) - \Phi(\Phi^{-1}(1/2 + \eps) - t)
    \\
    &\ge \frac{t}{\sqrt{2\pi}}\exp\left(-\max\{(\Phi^{-1}(1/2 + \eps))^2, (\Phi^{-1}(1/2 + \eps) - t)^2\}/2\right)
    \\
    &\ge \frac{t}{\sqrt{2\pi}}\exp\left(-\max\{(\Phi^{-1}(3/4))^2,  t^2\}/2\right).
\end{align*}
As above, we need to get the tail for large values of $t$. We need that at least $N/{4}$ of all observations are below $\Phi^{-1}(1/2 + \eps) - t$. In what follows, we assume $t \ge 2\Phi^{-1}(3/4)$. This can be controlled as follows
\begin{align*}
\Pr(Y_{((1/2 + \eps)N)} - \Phi^{-1}(1/2 + \eps) \le -t) &= \Pr(-Y_{((1/2 + \eps)N)} \ge t - \Phi^{-1}(1/2 + \eps))
\\
&\le \binom{N}{N/{4}}\left(\Pr(-Y_1 \ge t - \Phi^{-1}(1/2 + \eps))\right)^{N/{4}}
\\
&\le  2^N\left(\Pr(Y_1 \ge t - \Phi^{-1}(3/4))\right)^{N/{4}}
\\
&\le 2^N\left(\Pr(Y_1 \ge t/2)\right)^{N/{4}}
\\
&\le 2^N\exp(-Nt^2/{32}) 
\\
&\le \exp(-Nt^2/{64}),
\end{align*}
whenever $t \ge \max\{2\Phi^{-1}(3/4), {8}\sqrt{\log(2)}\}$. The proof of the lower tail follows. The union bound concludes the proof. Finally, our bound on the $\psi_2$-norm follows from \cite[Proposition 2.5.2]{Vershynin2016HDP}.
\end{proof}

Our second result presents a similar concentration bound for the empirical quantiles of i.i.d. observations drawn from $\chi^2_1$ distribution. This distribution coincides with the distribution of the squared standard Gaussian random variable. Denote the cumulative distribution function by $F_{\chi^2_1}(\cdot)$ and its inverse by $F_{\chi^2_1}^{-1}(\cdot)$. The key difference is that we only show a sub-exponential tail in this case. We remark that when considering the $\chi^2_k$ distribution with $k \ge 2$ degrees of freedom, the desired concentration inequality follows from log-concavity and \cite[Lemma 6.5]{bobkov2019one}.   

\begin{lemma}[Quantiles of the $\chi^2_1$ distribution]
\label{lem:orderstatofchisq}
There is an absolute constant $c_1 > 0$ such that the following holds. Assume without loss of generality that $(1/2 \pm \eps)N$ are integers. Let $Y_1, \ldots, Y_N$ be a sample of independent $\chi^2_1$ random variables and $\eps \in [0, 1/4]$.
Then, 
\[
\left\|Y_{((1/2 \pm \eps)N)} - F_{\chi^2_1}^{-1}(1/2 \pm \eps)\right\|_{\psi_1} \le \frac{c_1}{\sqrt{N}}.
\]
\end{lemma}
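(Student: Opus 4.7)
The plan is to follow the two-regime template of Lemma~\ref{lem:subGquantiles} almost verbatim, replacing only the Gaussian right-tail bound by the exponential right-tail bound for $\chi^2_1$; this single substitution is what demotes the sub-Gaussian conclusion to a sub-exponential one. I will treat the $(1/2+\eps)N$-th order statistic only, since the $(1/2-\eps)N$-th case is symmetric.

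For the small-deviation regime, I would perform the same Bernoulli reduction. With $Z_i = \ind{Y_i \ge s}$ and $s = F_{\chi^2_1}^{-1}(1/2+\eps) \pm t$, Hoeffding's inequality gives
\[
\Pr\!\bigl(\pm(Y_{((1/2+\eps)N)} - F_{\chi^2_1}^{-1}(1/2+\eps)) \ge t\bigr) \le \exp\!\bigl(-2N\varphi_\pm(t)^2\bigr),
\]
with $\varphi_\pm(t) = \bigl|F_{\chi^2_1}(F_{\chi^2_1}^{-1}(1/2+\eps)\pm t) - (1/2+\eps)\bigr|$. To get $\varphi_\pm(t) \ge c_0 t$ on a bounded range of $t$, I would lower bound the density $f_{\chi^2_1}(y) = (2\pi y)^{-1/2}e^{-y/2}$ on the relevant interval. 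The assumption $\eps \le 1/4$ forces $F_{\chi^2_1}^{-1}(1/2+\eps) \in [F_{\chi^2_1}^{-1}(1/2), F_{\chi^2_1}^{-1}(3/4)]$, and since $f_{\chi^2_1}$ is decreasing on $(0,\infty)$, the density is bounded below by a positive absolute constant on $[F_{\chi^2_1}^{-1}(1/2+\eps), F_{\chi^2_1}^{-1}(1/2+\eps)+1]$ (upper tail) and on $[0, F_{\chi^2_1}^{-1}(1/2+\eps)]$ (lower tail). For the lower tail when $t \ge F_{\chi^2_1}^{-1}(1/2+\eps)$ the event is a.s.\ impossible since $\chi^2_1 \ge 0$.

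For the large-deviation upper tail with $t$ beyond a suitable absolute constant, I would reuse the combinatorial argument of Lemma~\ref{lem:subGquantiles}: at least $N/4$ of the $Y_i$ must exceed $F_{\chi^2_1}^{-1}(1/2+\eps)+t$, so using $\Pr(\chi^2_1 \ge u) \le 2e^{-u/2}$,
\[
\Pr\!\bigl(Y_{((1/2+\eps)N)} - F_{\chi^2_1}^{-1}(1/2+\eps) \ge t\bigr) \le \binom{N}{N/4}\bigl(2e^{-t/2}\bigr)^{N/4} \le \exp(-cNt).
\]
This is exactly where the proof diverges from Lemma~\ref{lem:subGquantiles}: the Gaussian quadratic tail produced $e^{-cNt^2}$, whereas here the $\chi^2_1$ tail is only exponential, costing one power of $t$.

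Combining the $e^{-c_1 Nt^2}$ bound for bounded $t$ with the $e^{-c_2 Nt}$ bound for large $t$ gives $\Pr(|Y_{((1/2+\eps)N)} - F_{\chi^2_1}^{-1}(1/2+\eps)| \ge t) \le 2\exp(-c\sqrt{N}\,t)$ for every $t \ge 0$: the quadratic bound dominates the linear one whenever $t \ge 1/\sqrt{N}$, and is absorbed in the trivial bound $2$ for smaller $t$. The stated $\psi_1$-norm bound then follows from the standard characterization in \cite[Proposition~2.7.1]{Vershynin2016HDP}. The only place where care is needed, and the mild obstacle to the argument, is keeping $F_{\chi^2_1}^{-1}(1/2\pm\eps)$ bounded away from the singularity of $f_{\chi^2_1}$ at $0$; this is exactly what the restriction $\eps \in [0,1/4]$ guarantees.
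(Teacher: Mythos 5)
Your proof is correct and follows essentially the same two-regime strategy as the paper, the only cosmetic difference being the last step: you collapse the mixed $\exp(-cN\min\{t,t^2\})$ tail directly into a pure sub-exponential $\exp(-c\sqrt{N}\,t)$ tail before invoking the tail characterization of the $\psi_1$-norm, whereas the paper first records the mixed tail and then applies Lemma~\ref{lem:expmoment} (a moment computation), but these routes are equivalent. One small misdiagnosis in your closing remark: the singularity of the $\chi^2_1$ density at $0$ is actually \emph{helpful} (the density is large there, which only improves the lower bound on $\varphi_-$), and the true role of $\eps \le 1/4$ is to keep $F_{\chi^2_1}^{-1}(1/2+\eps)$ bounded above so that the decreasing density remains bounded below at the points where you evaluate it; this imprecision does not affect the validity of your argument.
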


\begin{proof}
First, the density $g$ of $\chi_1^2$ is given by 
\begin{equation}
    \label{eq:expden}
    g(x) = \frac{\exp(-x/2)}{\sqrt{2\pi x}}, \quad x > 0.
\end{equation}
It is also easy to show that $F^{-1}_{\chi_1^2}(1/2) = (\Phi^{-1}(3/4))^2$.
Using the same notation again, we denote $\varphi_+(t) = F_{\chi_1^2}(F^{-1}_{\chi_1^2}(1/2 + \eps)+ t) - 1/2 - \eps$. Repeating the lines of the proof of Lemma \ref{lem:subGquantiles}, whenever $\varphi_+(t) \ge 0$, we have
\begin{equation}
\label{eq:uppertailphi-chisq}
\Pr(Y_{((1/2 + \eps)N)} -  F^{-1}_{\chi_1^2}(1/2 + \eps)\ge t) \le \exp\left(-2N(\varphi_+(t))^2\right).
\end{equation}
Using \eqref{eq:expden}, we have
\begin{align*}
\varphi_+(t) &= F_{\chi_1^2}(F^{-1}_{\chi_1^2}(1/2 + \eps)+ t) - F_{\chi_1^2}(F^{-1}_{\chi_1^2}(1/2 + \eps))
\\
&\ge \frac{t\exp(-F_{\chi_1^2}(F^{-1}_{\chi_1^2}(1/2 + \eps)+ t))}{\sqrt{2\pi F_{\chi_1^2}(F^{-1}_{\chi_1^2}(1/2 + \eps)+ t)}}.
\end{align*}
This gives us a sub-Gaussian tail for as long as $\eps \le 1/4$ and $t \ge 0$ is bounded by some absolute constant. By the concentration of the $\chi^2$ distribution \cite[Lemma 1]{laurent2000adaptive} we have for $t \ge 0$,
\[
\Pr(Y_1 \ge 1 + \sqrt{2t} + t) \le \exp(-t),\ \textrm{and thus},\  \Pr(Y_1 \ge t) \le \exp(-t/2), \ \textrm{whenever}\ t \ge 4 + 2\sqrt{3}.
\]
Since $\eps \le 1/4$, we need that at least $N/4$ of all observations are above $F^{-1}_{\chi_1^2}(1/2 + \eps) + t$. Therefore, whenever $t \ge 4 + 2\sqrt{3}$, we have
\begin{align*}
\Pr(Y_{((1/2 + \eps)N)} - F^{-1}_{\chi_1^2}(1/2 + \eps) \ge t) &\le \binom{N}{N/4}\left(\Pr(Y_1 \ge \Phi^{-1}(1/2 + \eps) + t)\right)^{N/4}
\\
&\le\binom{N}{N/4}\left(\Pr(Y_1 \ge t)\right)^{N/4}
\\
&\le 2^N\exp(-Nt/8)
\\
&= \exp(N\log(2) - Nt/8)
\\
&\le \exp(-Nt/16),
\end{align*}
where the last inequality requires additionally $t \ge 16\log (2)$. Combining the above bounds and adjusting the absolute constant $c_1 \ge 0$ we show that 
\[
\Pr(Y_{((1/2 + \eps)N)} - F^{-1}_{\chi_1^2}(1/2 + \eps) \ge t) \le \exp(-c_1N\min\{t, t^2\}).
\]
We continue with the bound on the lower tail. We want to show for any $t \ge 0$,
\[
\Pr(Y_{((1/2 + \eps)N)} - F^{-1}_{\chi_1^2}(1/2 + \eps) \le - t) \le \exp(-c_2Nt^2),
\]
where $c_2$ is an absolute constant.
For $0 \le t < F^{-1}_{\chi_1^2}(1/2 + \eps)$ we define
\[
\varphi_-(t) = F^{-1}_{\chi_1^2}(F^{-1}_{\chi_1^2}(1/2 + \eps) - t) - 1/2 - \eps .
\]
Using the same argument we show
\[
\Pr\left(Y_{((1/2 + \eps)N)} - F^{-1}_{\chi_1^2}(1/2 + \eps) \le -t\right) \le \exp\left(-2N(\varphi_-(t))^2\right).
\]
We have 
\[
    |\varphi_-(t)| = F_{\chi_1^2}(F^{-1}_{\chi_1^2}(1/2 + \eps)) - F_{\chi_1^2}(F^{-1}_{\chi_1^2}(1/2 + \eps) - t) \ge \frac{t\exp\left(-F^{-1}_{\chi_1^2}(1/2 + \eps)\right)}{\sqrt{2\pi F^{-1}_{\chi_1^2}(1/2 + \eps)}}.
\]
Since $\eps \le 1/4$ we conclude the proof in the regime $t \le F^{-1}_{\chi_1^2}(1/2 + \eps)$.
Observe that due to the non-negativity of $Y_{((1/2 + \eps)N)}$ we can extend this bound to all $t > F^{-1}_{\chi_1^2}(1/2 + \eps)$. Lemma \ref{lem:expmoment} in Section \ref{sec:techresults} concludes the proof. The analysis of the $(1/2-\eps)$-th quantile repeats the same lines.
\end{proof}

Our final result proves a similar bound for the (standard) half-normal distribution. Namely, we want to prove the concentration inequality for quantiles of the absolute values of standard Gaussian random variables. Denote the cumulative distribution function of this distribution by $\Phi_{\operatorname{H}}(\cdot)$ and its inverse by $\Phi^{-1}_{\operatorname{H}}(\cdot)$. 
\begin{lemma}[Quantiles of the half-normal distribution]
\label{lem:orderstatsofhalfnormal}
There is an absolute constant $c_1$ such that the following holds. Assume without loss of generality that $(1/2 \pm \eps)N$ are integers. Let $Y_1, \ldots, Y_N$ be a sample of independent half-normal random variables and $\eps \in [0, 1/4]$.
Then, 
\[
\left\|Y_{((1/2 \pm \eps)N)} - \Phi^{-1}_{\operatorname{H}}(1/2 \pm \eps)\right\|_{\psi_2} \le \frac{c_1}{\sqrt{N}}.
\]
\end{lemma}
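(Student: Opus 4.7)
The plan is to mirror the two-regime proof of Lemma \ref{lem:subGquantiles} essentially verbatim, treating only $Y_{((1/2+\eps)N)}$ since the $(1/2-\eps)$-th quantile is handled by the same argument (with the roles of upper/lower tails swapped). The only ingredient specific to the half-normal distribution is its density $f_{\operatorname{H}}(x) = \sqrt{2/\pi}\exp(-x^2/2)\ind{x \ge 0}$ and its sub-Gaussian tail $\Pr(Y_1 \ge t) \le \exp(-t^2/2)$ for $t \ge 0$, both inherited directly from the standard Gaussian.

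The first step is the standard reduction to Bernoulli tails: letting $Z_i = \ind{Y_i \ge s}$ for $s = \Phi_{\operatorname{H}}^{-1}(1/2+\eps) + t$, Hoeffding's inequality gives
\[
\Pr\bigl(Y_{((1/2+\eps)N)} - \Phi_{\operatorname{H}}^{-1}(1/2+\eps) \ge t\bigr) \le \exp\bigl(-2N\varphi_+(t)^2\bigr),
\]
with $\varphi_+(t) = \Phi_{\operatorname{H}}(\Phi_{\operatorname{H}}^{-1}(1/2+\eps)+t) - 1/2 - \eps$, valid whenever $\varphi_+(t) \ge 0$. Integrating the density on $[\Phi_{\operatorname{H}}^{-1}(1/2+\eps), \Phi_{\operatorname{H}}^{-1}(1/2+\eps)+t]$ and using $\eps \le 1/4$ to bound $\Phi_{\operatorname{H}}^{-1}(1/2+\eps) \le \Phi_{\operatorname{H}}^{-1}(3/4)$ yields
\[
\varphi_+(t) \ge \sqrt{2/\pi}\, t\, \exp\bigl(-(\Phi_{\operatorname{H}}^{-1}(3/4)+t)^2/2\bigr),
\]
which produces a clean sub-Gaussian tail on any bounded range $0 \le t \le C$.

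For the regime $t \ge C$ I use the brute-force binomial bound exactly as in Lemma \ref{lem:subGquantiles}: since $\eps \le 1/4$, at least $N/4$ observations must exceed $\Phi_{\operatorname{H}}^{-1}(1/2+\eps)+t$, so
\[
\Pr\bigl(Y_{((1/2+\eps)N)} - \Phi_{\operatorname{H}}^{-1}(1/2+\eps) \ge t\bigr) \le \binom{N}{N/4}\bigl(\Pr(Y_1 \ge t)\bigr)^{N/4} \le 2^N \exp(-Nt^2/8) \le \exp(-Nt^2/16)
\]
once $t \ge 4\sqrt{\log 2}$. Choosing $C = 4\sqrt{\log 2}$ and combining the two regimes (adjusting the constant) yields the sub-Gaussian upper tail.

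For the lower tail, the structure is simpler than in the Gaussian case because $Y_i \ge 0$ almost surely: the event $\{Y_{((1/2+\eps)N)} - \Phi_{\operatorname{H}}^{-1}(1/2+\eps) \le -t\}$ is empty as soon as $t > \Phi_{\operatorname{H}}^{-1}(1/2+\eps)$, which is itself bounded by the absolute constant $\Phi_{\operatorname{H}}^{-1}(3/4)$, and the desired sub-Gaussian bound is trivially true there. In the only nontrivial range $0 \le t \le \Phi_{\operatorname{H}}^{-1}(3/4)$, the same Bernoulli-to-Hoeffding reduction applied to $\varphi_-(t) = 1/2 + \eps - \Phi_{\operatorname{H}}(\Phi_{\operatorname{H}}^{-1}(1/2+\eps) - t)$ works, and the density lower bound now reads
\[
|\varphi_-(t)| \ge \sqrt{2/\pi}\, t \exp\bigl(-\Phi_{\operatorname{H}}^{-1}(3/4)^2/2\bigr),
\]
which is linear in $t$ with an absolute constant prefactor, so Hoeffding again gives the sub-Gaussian bound. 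The $\psi_2$-norm estimate then follows from \cite[Proposition 2.5.2]{Vershynin2016HDP} exactly as in the proof of Lemma \ref{lem:subGquantiles}. There is no real obstacle here beyond bookkeeping; the non-negativity of the half-normal actually simplifies the lower-tail analysis relative to Lemma \ref{lem:subGquantiles}, and the (bounded) median of the half-normal is what ensures the density lower bound never degenerates for $\eps \in [0,1/4]$.
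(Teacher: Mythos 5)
Your proof is correct and follows exactly the strategy the paper intends when it says the argument "repeats the same computations used in the proofs of Lemma~\ref{lem:subGquantiles} and Lemma~\ref{lem:orderstatofchisq}": the same two-regime split (Bernoulli/Hoeffding reduction with a density lower bound for small $t$, crude binomial union bound with the sub-Gaussian tail $\Pr(Y_1\ge t)\le e^{-t^2/2}$ for large $t$), concluded via \cite[Proposition~2.5.2]{Vershynin2016HDP}. Your observation that non-negativity makes the lower-tail large-$t$ regime vacuous is a genuine (and accurate) streamlining relative to the Gaussian case, but it is the same trick the paper uses in the proof of Lemma~\ref{lem:orderstatofchisq}, so this is still the paper's approach.
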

The proof of this result repeats the same computations used in the proofs of Lemma \ref{lem:subGquantiles} and Lemma \ref{lem:orderstatofchisq}. We omit the details.

\section{Auxiliary results}
\label{sec:techresults}
The following section contains several technical results used throughout the paper.
We start with a bound usually referred to as the \emph{PAC-Bayesian lemma}, which is a direct consequence of the Donsker–Varadhan’s
variational formula for the relative entropy \cite{donsker1975asymptotic}. 

\begin{lemma}
\label{lem:pacbayes}
Assume that $X$ is a random variable defined on some measurable space $\mathcal X$. Assume also that $\Theta$ (called the parameter space) is a subset of $\mathbb{R}^d$. Let $\gamma$ be a distribution (called prior) on $\Theta$ and let $\rho$ be any distribution (called posterior) on $\Theta$ such that $\rho \ll \gamma$. Let $f: \mathcal X \times \Theta \to \mathbb{R}$ be such that $\E_X\exp(f(X, \theta))$ is finite $\gamma$-almost surely. Then, we have 
\begin{align*}
    \Pr_{X}\Big(\textrm{for all}\  \rho \ll \gamma ~:~  \E_{\rho}f(X, \theta) \le \E_{\rho}\log(\E_X\exp(f(X, \theta))) + \mathcal{KL}(\rho, \gamma) + t \Big) \ge 1 - e^{-t}.
\end{align*}
\end{lemma}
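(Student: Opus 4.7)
The plan is to prove this by the standard exponential-moment approach combined with the Donsker--Varadhan variational formula. First I would introduce, for each realization of $X$, the quantity
\[
M(X) = \int_{\Theta} \exp\bigl(f(X, \theta) - \log \E_{X'}\exp(f(X', \theta))\bigr)\, d\gamma(\theta),
\]
where $X'$ is an independent copy of $X$ so that the inner expectation is a deterministic function of $\theta$. The point of this normalization is that, by Fubini (justified by the $\gamma$-a.s.\ finiteness assumption on $\E_X\exp(f(X,\theta))$), one immediately gets $\E_X M(X) = 1$. Applying Markov's inequality to the nonnegative random variable $M(X)$ then yields, with probability at least $1 - e^{-t}$, the deterministic bound $\log M(X) \le t$.

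Next I would invoke the Donsker--Varadhan variational representation of the Kullback--Leibler divergence: for any measurable $g: \Theta \to \R$ with $\E_\gamma\exp(g) < \infty$ and any $\rho \ll \gamma$,
\[
\E_{\rho}\, g(\theta) \le \log \E_{\gamma}\exp(g(\theta)) + \mathcal{KL}(\rho, \gamma).
\]
I would apply this (pointwise in $X$, on the good event) with the choice
\[
g(\theta) = f(X, \theta) - \log \E_{X'}\exp(f(X', \theta)),
\]
so that $\log \E_\gamma \exp(g) = \log M(X)$. This gives, simultaneously for every $\rho \ll \gamma$,
\[
\E_{\rho}\bigl[f(X, \theta) - \log \E_{X'}\exp(f(X', \theta))\bigr] \le \log M(X) + \mathcal{KL}(\rho, \gamma) \le t + \mathcal{KL}(\rho, \gamma).
\]
Rearranging the first term by linearity of $\E_\rho$ yields the claimed inequality.

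The only subtle point, and the thing to flag as the main obstacle, is that the statement asserts the bound holds \emph{uniformly} over all admissible posteriors $\rho$ on the same event, not one posterior at a time. This is precisely the strength of the PAC-Bayesian formulation: the random deviation control is extracted once via $M(X)$, which depends on $X$ alone and not on $\rho$, and the Donsker--Varadhan inequality then converts this single exponential-moment bound into a simultaneous statement over all $\rho \ll \gamma$. A secondary technicality is measurability/integrability when swapping expectations in Fubini and when applying Donsker--Varadhan to $g$; both are taken care of by the hypothesis that $\E_X\exp(f(X, \theta))$ is finite $\gamma$-a.s., together with the convention $0 \cdot \infty = 0$ when $\rho$ puts mass on the $\gamma$-null set where the inner expectation diverges.
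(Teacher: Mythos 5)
Your proof is correct and follows exactly the route the paper indicates: the paper gives no proof of this lemma, citing references and describing it as a direct consequence of the Donsker--Varadhan variational formula, which is precisely your argument (normalize to get $\E_X M(X)=1$ by Fubini, apply Markov once to the $\rho$-independent quantity $M(X)$, then invoke Donsker--Varadhan to obtain the bound uniformly over all $\rho \ll \gamma$). Your handling of the uniformity and of the integrability issues matches the standard treatment in the cited works, so there is nothing to correct.
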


One of the key arguments, used in several recent papers on mean and covariance estimation of heavy-tailed distributions \cite{catoni2017dimension, giulini2018robust, abdalla2022covariance, oliveira2022improved}, is a skillful application of this lemma allowing to bypass the sphere-covering and VC-type arguments. Lemma \ref{lem:pacbayes} will play the same key role in our analysis. However, previous applications of this lemma were based on sums of (truncated) random variables in place of $f(X, \theta)$ (in this case $X$ is essentially a vector of independent random variables $X_1, \ldots, X_N$), while we are exploiting the interplay between Lemma \ref{lem:pacbayes} and sample quantiles of particular univariate distributions. The proof of Lemma \ref{lem:pacbayes} and some of its applications can be found in \cite{catoni2017dimension, zhivotovskiy2021dimension}. 

\subsection{Analysis of the posterior distribution}
Another technical aspect of our analysis is the introduction of truncated posterior distributions in the context of robust estimation. 
For a given positive semi-definite matrix $G$ and $r \ge 0$, we truncate the multivariate Gaussian distribution with mean $v \in S^{d - 1}$ and covariance $\beta^{-1}I_d$ as follows. Define the density function 
\begin{equation}
\label{eq:measureforkl}
f_v(x) = \frac{1}{p(2\pi\beta^{-1})^{d/2}}\exp\left(-\frac{\beta\|x - v\|^2}{2}\right)\ind{\|G^{1/2}(x - v)\| \le r},
\end{equation}
where $p > 0$ is a normalization factor. 
We proceed with the following result. 

\begin{lemma}[Properties of the truncated posterior]
\label{lem:kllemma}
Let $r, \beta > 0$ and let $G$ denote a positive semi-definite in the definition \eqref{eq:measureforkl}. 
Let $\Sigma$ be a covariance matrix of a zero mean random vector $X$ in $\mathbb{R}^d$ satisfying
\[
\frac{1}{10}\Sigma \preceq G, \quad \textrm{and}\quad \tr(G) \le 10\tr(\Sigma).
\]
Let $\gamma$ be a Gaussian measure in $\mathbb{R}^d$ with mean zero and covariance $\beta^{-1}I_d$. If, additionally
\[
r \ge \sqrt{20\beta^{-1}\tr(\Sigma)},
\]
then we have
\[
\mathcal{KL}(\rho_v, \gamma) \le \log(2) + \beta/2.
\]
Furthermore, let $\theta$ be distributed according to $\rho_v$. Then, $\E_{\rho_v}\theta = v$, and almost surely with respect to the realization of $\theta$, we have
\begin{equation}
\label{eq:almostsurebound}
\theta^{\top}\Sigma\theta \le 2\|\Sigma\| + 20r^2.
\end{equation}

\end{lemma}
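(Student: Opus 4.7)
The plan is to unpack the KL divergence by direct computation, control the normalization constant $p$ via Markov's inequality, and then handle the two remaining claims by symmetry and a convexity-style split.

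First, since both $\gamma$ and the (un-normalized) $f_v$ have Gaussian densities with common covariance $\beta^{-1} I_d$, the log-ratio simplifies to
\[
\log \frac{f_v(x)}{g(x)} = -\log p - \tfrac{\beta}{2}\bigl(\|x-v\|^2 - \|x\|^2\bigr) \cdot \ind{\|G^{1/2}(x-v)\|\le r} + (\textrm{indicator correction}),
\]
and after expanding $\|x-v\|^2 - \|x\|^2 = -2\langle x,v\rangle + \|v\|^2$ and integrating against $\rho_v$, I get $\mathcal{KL}(\rho_v,\gamma) = -\log p + \beta\langle \E_{\rho_v}\theta, v\rangle - \tfrac{\beta}{2}\|v\|^2$. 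Because the ellipsoid $\{x: \|G^{1/2}(x-v)\|\le r\}$ is symmetric about $v$, and the Gaussian density is symmetric about $v$, the restricted measure $\rho_v$ is symmetric about $v$, hence $\E_{\rho_v}\theta = v$; combined with $\|v\|=1$ this gives $\mathcal{KL}(\rho_v,\gamma) = -\log p + \beta/2$. This simultaneously takes care of the second claim of the lemma.

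Next I bound $p$. By construction $p = \Pr_{Y\sim\gamma}(\|G^{1/2}(Y - 0 + \ldots)\| \le r)$; actually, by translation, $p$ equals the mass under $\mathcal{N}(0,\beta^{-1}I_d)$ of the ellipsoid $\{y : \|G^{1/2}y\| \le r\}$. Markov's inequality gives
\[
1 - p \le \frac{\E\|G^{1/2}Y\|^2}{r^2} = \frac{\beta^{-1}\tr(G)}{r^2} \le \frac{10\beta^{-1}\tr(\Sigma)}{r^2} \le \tfrac{1}{2},
\]
where the last inequality uses the hypothesis $r^2 \ge 20\beta^{-1}\tr(\Sigma)$ together with $\tr(G) \le 10\tr(\Sigma)$. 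Thus $p \ge 1/2$ and $-\log p \le \log 2$, yielding the stated KL bound.

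For the almost-sure bound \eqref{eq:almostsurebound}, on the support of $\rho_v$ we have $\|G^{1/2}(\theta-v)\|^2 \le r^2$, and since $\Sigma \preceq 10 G$ this gives $(\theta-v)^\top\Sigma(\theta-v) \le 10 (\theta-v)^\top G(\theta-v) \le 10 r^2$. Writing $\theta = v + (\theta-v)$ and applying $\|a+b\|^2 \le 2\|a\|^2 + 2\|b\|^2$ to $a = \Sigma^{1/2}v$, $b = \Sigma^{1/2}(\theta-v)$, we obtain
\[
\theta^\top\Sigma\theta = \|\Sigma^{1/2}\theta\|^2 \le 2\|\Sigma^{1/2}v\|^2 + 2\|\Sigma^{1/2}(\theta-v)\|^2 \le 2\|\Sigma\| + 20 r^2,
\]
using $\|v\|=1$. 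This completes all three claims. There is no real obstacle here beyond bookkeeping; the only spot that requires care is verifying that the symmetry argument giving $\E_{\rho_v}\theta = v$ is genuinely legitimate, which it is because both the Gaussian and the truncation set are invariant under $x \mapsto 2v - x$.
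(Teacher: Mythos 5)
Your proof is correct and follows essentially the same route as the paper's: the same direct computation reducing $\mathcal{KL}(\rho_v,\gamma)$ to $\log(1/p)+\beta/2$ via symmetry of $\rho_v$ about $v$, the same Markov-inequality bound $1-p \le \beta^{-1}\tr(G)/r^2 \le 1/2$ using $\tr(G)\le 10\tr(\Sigma)$ and $r^2 \ge 20\beta^{-1}\tr(\Sigma)$, and the same splitting $\theta^{\top}\Sigma\theta \le 2v^{\top}\Sigma v + 2(\theta-v)^{\top}\Sigma(\theta-v) \le 2\|\Sigma\| + 20r^2$ using $\Sigma \preceq 10G$ on the support. No gaps; only your bookkeeping of the indicator in the log-ratio is slightly informal, but it is harmless since the KL integral is taken over the support of $\rho_v$.
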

\begin{proof}
We use that for $\theta$ distributed according to $\rho_v$ it holds that $\E_{\rho_v} \theta = v$. This follows from the symmetry of the density around $v$. Let $g$ denote the density of a Gaussian random vector with mean zero and covariance $\beta^{-1}I_d$. To control $\mathcal{KL}(\rho_v, \mu)$ we write 

\begin{align*}
\int\log\left(\frac{f_v(x)}{g(x)}\right)f_v(x)dx &= \E_{\rho_{v}}\log\left(\frac{1}{p}\exp\left(\frac{-\beta\|\theta - v\|^2 + \beta\|\theta\|^2}{2}\right)\right)
\\
&=\log\left(\frac{1}{p}\right) +  \E_{\rho_{v}}\left(\frac{-\beta\|v\|^2 + 2\beta\langle \theta, v\rangle}{2}\right)
\\
&= \log\left(\frac{1}{p}\right) + \frac{\beta}{2}.
\end{align*}
To prove the desired inequality we observe that
\[
p = \Pr(\|G^{1/2}W\| \le r),
\]
where $W$ is a zero mean Gaussian random vector with covariance $\beta^{-1}I_d$. Since $\tr(G) \le 10\tr(\Sigma)$, a simple computation shows that
\[
\Pr(\|G^{1/2}W\| \ge r) \le \E W^{\top}GW/r^2 = \beta^{-1}\tr(G)/r^{2} \le 10\beta^{-1}\tr(\Sigma)/r^{2}  \le 1/2,
\]
as long as $r \ge \sqrt{20\beta^{-1}\tr(\Sigma)}$. Thus, under this assumption $p \ge 1/2$, and hence $\log(1/p) \le \log 2$. This proves the first inequality. Using the second property of the matrix $G$, we have
\[
\theta^{\top}\Sigma\theta \le 2v^{\top}\Sigma v + 2(\theta - v)^{\top}\Sigma(\theta - v) \le 2v^{\top}\Sigma v + 20(\theta - v)^{\top}G(\theta - v)\le  2\|\Sigma\| + 20r^2.
\]
The claim follows.
\end{proof}

Our next result convertes a mixed sub-Gaussian/sub-exponential tail bound into a bound on the $\|\cdot\|_{\psi_1}$-norm. We present this standard computation for the sake of completeness.
\begin{lemma}
\label{lem:expmoment}
Assume that a random variable $X$ satisfies for all $t \ge 0$,
\[
\Pr(|X| \ge t) \le 2\exp(-K\min\{t^2, t\}),
\]
where $K > 1$ is some constant. Then, there is an absolute constant $c > 0$ such that
\[
\|X\|_{\psi_1} \le \frac{c}{\sqrt{K}}.
\]
\end{lemma}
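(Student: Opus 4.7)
The plan is to work directly from the definition of the Orlicz norm: it suffices to exhibit an absolute constant $C>0$ such that, with $c = C/\sqrt{K}$, one has $\E\exp(|X|/c) \le 2$. By the layer-cake formula,
\[
\E\exp(|X|/c) \;=\; 1 + \frac{1}{c}\int_0^\infty e^{t/c}\,\Pr(|X|\ge t)\,dt,
\]
so it suffices to bound the integral by $1$. I will plug in the assumed tail bound $2\exp(-K\min\{t^2,t\})$ and split the integral at $t=1$, which is exactly the threshold at which $\min\{t^2,t\}$ switches regime.

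For the sub-Gaussian piece $t\in[0,1]$, I substitute $u = t\sqrt{K}$ so that the exponent becomes $u/C - u^2$, completing the square gives a factor bounded by $e^{1/(4C^2)}\sqrt{\pi}$, and the Jacobian combines with the prefactor $1/c = \sqrt{K}/C$ to yield a bound of the form $(2/C)\,e^{1/(4C^2)}\sqrt{\pi}$. This is $\le 1/2$ once $C$ is taken sufficiently large.

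For the sub-exponential piece $t\ge 1$, the exponent is $t/c - Kt = -tK(1 - 1/(C\sqrt{K}))$. Using the hypothesis $K>1$, and choosing $C\ge 2$, the factor in parentheses is at least $1/2$, so the integrand is dominated by $e^{-tK/2}$. Integrating gives $(4/(C\sqrt{K}))e^{-K/2}$, which is bounded by a small absolute constant (again $\le 1/2$ for large $C$). Combining the two contributions yields $\E\exp(|X|/c)\le 2$, hence $\|X\|_{\psi_1}\le C/\sqrt{K}$.

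The only mild bookkeeping obstacle is that the sub-exponential regime needs $K>1$ to ensure the exponent $t/c - Kt$ is negative (and in fact proportional to $-Kt$) once $C$ is large; without this assumption the bound would be replaced by $c/\sqrt{K\wedge 1}$. Everything else is routine: choose $C$ large enough so that both contributions are at most $1/2$, and conclude by the definition of the $\psi_1$ norm.
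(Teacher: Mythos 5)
Your argument is correct, but it takes a genuinely different route from the paper's. The paper computes the $p$-th moments $\E|X|^p$ directly (layer-cake, splitting $\exp(-K\min\{t^2,t\})$ into a Gaussian and an exponential integrand, evaluating each via the Gamma function), obtains $\E|X|^p \le 5p^p/K^{p/2}$ after using $K > 1$ to absorb the $1/K^p$ term into $1/K^{p/2}$, and then invokes the moment characterization of the $\psi_1$-norm (Vershynin, Proposition 2.7.1(b)). You instead verify the Orlicz-norm definition head-on, showing $\E\exp(|X|/c)\le 2$ for $c = C/\sqrt{K}$ via the layer-cake identity $\E\exp(|X|/c) = 1 + c^{-1}\int_0^\infty e^{t/c}\Pr(|X|\ge t)\,dt$, splitting at $t=1$, completing the square on the Gaussian piece, and using $K>1$ together with $C\ge 2$ to make the exponent on the sub-exponential piece strictly negative and proportional to $-Kt$. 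Your approach is more self-contained (no appeal to the moment-to-$\psi_1$ equivalence), at the cost of slightly more delicate bookkeeping (choosing $C$, completing the square). Both approaches exploit $K>1$ in essentially the same way — to make the sub-exponential regime contribute no worse than $1/\sqrt{K}$ — so the use of the hypothesis is parallel even though the machinery is different.
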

\begin{proof}
We can simply compute the moments of $X$. For fixed $p \ge 1$, we have
\begin{align*}
\E|X|^p &= \int\limits_{0}^{\infty}\Pr(|X|^p \ge t) ~dt= \int\limits_{0}^{\infty}\Pr(|X| \ge t)pt^{p - 1}~dt \le 2\int\limits_{0}^{\infty}\exp(-K\min\{t^2, t\})pt^{p - 1}dt 
\\
&\le 2\int\limits_{0}^{\infty}\exp(-Kt^2)pt^{p - 1}dt + 2\int\limits_{0}^{\infty}\exp(-Kt)pt^{p - 1}dt = \frac{1}{K^{p/2}}p\Gamma(p/2) + \frac{1}{K^p}2p\Gamma(p)
\\
&\le \frac{3p(p/2)^{p/2}}{K^{p/2}} + \frac{2p^p}{K^p} \le \frac{3p^p}{K^{p/2}} + \frac{2p^p}{K^p} \le \frac{5p^p}{K^{p/2}},
\end{align*}
where $\Gamma(\cdot)$ stands for the gamma function, and we used $\Gamma(x) \le 3x^x$ for all $x \ge 1/2$ together with $p\Gamma(p) = \Gamma(p+1) \le p^p$. Finally, \cite[Proposition 2.7.1, {(b)}]{Vershynin2016HDP} implies the desired bound.
\end{proof}

\section{Proofs of main results}
We begin with the proof of our first main result that yields that the estimator defined in \eqref{eq:ourestimator} achieves an optimal error bound for the robust mean estimation problem. We discuss the optimality of our results at the end of this section.
\begin{proof}[Proof of Theorem \ref{thm:maintheorem}.]
First, by the definition of our estimator, we have
\[
\widehat{\mu} = \argmin_{\nu \in \mathbb{R}^d}\sup\limits_{v \in S^{d - 1}}|\E_{\rho_{v}}\operatorname{Med}(\langle X_1, \theta \rangle, \ldots, \langle X_{N}, \theta \rangle) - \langle \nu, v\rangle|,
\]
where $\rho_v$ is a multivariate Gaussian distribution in $\mathbb{R}^d$ with mean $v$ and covariance $\beta^{-1}I_d$. By the triangle inequality and the definition of our estimator, we have
\begin{align*}
    \|\widehat{\mu} - \mu\| &= \sup_{v \in S^{d-1}} \langle \widehat{\mu} - \mu, v \rangle \\
    &\le \sup\limits_{v \in S^{d - 1}}|\E_{\rho_{v}}\operatorname{Med}(\langle X_1, \theta \rangle, \ldots, \langle X_{N}, \theta \rangle) - \langle \widehat{\mu}, v\rangle|
    \\
    &\quad+ \sup\limits_{v \in S^{d - 1}}|\E_{\rho_{v}}\operatorname{Med}(\langle X_1, \theta \rangle, \ldots, \langle X_{N}, \theta \rangle) - \langle \mu, v\rangle|
    \\
    &\le 2\sup\limits_{v \in S^{d - 1}}|\E_{\rho_{v}}\operatorname{Med}(\langle X_1, \theta \rangle, \ldots, \langle X_{N}, \theta \rangle) - \langle \mu, v\rangle|
    \\
    &=2\sup\limits_{v \in S^{d - 1}}|\E_{\rho_{v}}\operatorname{Med}(\langle X_1-\mu, \theta \rangle, \ldots, \langle X_{N} - \mu, \theta \rangle)|.
    \end{align*}
We need to bound the last quantity. From now on we can assume without loss of generality that $\mu = 0$. Assume $Y_1, \ldots, Y_{N}$ is an uncontaminated sample of zero mean independent Gaussians with covariance $\Sigma$.  That is, at most $\eps N$ elements among $X_1, \ldots, X_{N}$ are different from their $Y_1, \ldots, Y_{N}$ counterparts. Observe that the sample median of projections of the contaminated sample on any direction cannot be too far away from $1/2 \pm \eps$ quantiles of the corresponding projections for the uncontaminated sample. Formally, assuming that both the sample median and $1/2\pm \eps$ sample quantiles are unique, we have $\rho_v$-almost surely
\begin{align*}
\operatorname{Quant}_{\frac{1}{2} - \eps}(\langle Y_1, \theta \rangle, \ldots, \langle Y_{N}, \theta \rangle) &\le \operatorname{Med}(\langle X_1, \theta \rangle, \ldots, \langle X_{N}, \theta \rangle) 
\\
&\le\operatorname{Quant}_{\frac{1}{2}  + \eps}(\langle Y_1, \theta \rangle, \ldots, \langle Y_{N}, \theta \rangle),
\end{align*}
and thus, taking the expectation with respect to $\rho_v$, we have
\begin{align*}
\E_{\rho_v}\operatorname{Quant}_{\frac{1}{2} - \eps}(\langle Y_1, \theta \rangle, \ldots, \langle Y_{N}, \theta \rangle) &\le \E_{\rho_v}\operatorname{Med}(\langle X_1, \theta \rangle, \ldots, \langle X_{N}, \theta \rangle) 
\\
&\le \E_{\rho_v}\operatorname{Quant}_{\frac{1}{2}  + \eps}(\langle Y_1, \theta \rangle, \ldots, \langle Y_{N}, \theta \rangle).
\end{align*}
Therefore, we have for any $v \in S^{d-1}$,
\begin{align*}
\left|\E_{\rho_{v}}\operatorname{Med}(\langle X_1, \theta \rangle, \ldots, \langle X_{N}, \theta \rangle)\right|
&\le \left|\E_{\rho_{v}}\operatorname{Quant}_{\frac{1}{2} + \eps}(\langle Y_1, \theta \rangle, \ldots, \langle Y_{N}, \theta \rangle)\right|
\\
&\qquad +\left|\E_{\rho_{v}}\operatorname{Quant}_{\frac{1}{2} - \eps}(\langle Y_1, \theta \rangle, \ldots, \langle Y_{N}, \theta \rangle)\right|.
\end{align*}
Both terms will be analyzed similarly. We only analyze the first one.
Observe that due to the spherical symmetry, we have that $S_{N} = \{\langle Y_1, \theta \rangle/\sqrt{\theta^{\top}\Sigma\theta}, \ldots, \langle Y_{N}, \theta \rangle/\sqrt{\theta^{\top}\Sigma\theta}\}$ consists of independent standard Gaussian random variables (in our case, $\theta \neq 0$ almost surely). We have
\begin{align*}
\left|\E_{\rho_{v}}\operatorname{Quant}_{\frac{1}{2} + \eps}(\langle Y_1, \theta \rangle, \ldots, \langle Y_{N}, \theta \rangle)\right| &\le\left|\E_{\rho_{v}}\sqrt{\theta^{\top}\Sigma\theta}\left(\operatorname{Quant}_{\frac{1}{2} + \eps}(S_{N}) - \E\operatorname{Quant}_{\frac{1}{2} + \eps}(S_{N})\right)\right|
\\
&\qquad+\left|\E_{\rho_v}\sqrt{\theta^{\top}\Sigma\theta}\cdot(\E\operatorname{Quant}_{\frac{1}{2} + \eps}(S_{N}) - \Phi^{-1}(1/2 + \eps))\right|
\\
&\qquad+\E_{\rho_{v}}\sqrt{\theta^{\top}\Sigma\theta}\cdot\Phi^{-1}(1/2 + \eps)
\\
& = (I) + (II) + (III).
\end{align*}
To upper bound $(I)$ we want to apply Lemma \ref{lem:pacbayes}. Fix $\lambda > 0$ and let $\gamma$ be a multivariate Gaussian distribution in $\mathbb{R}^d$ with zero mean and covariance $\beta^{-1}I_d$. The standard formula implies $\mathcal{KL}(\rho_v, \gamma) = \beta/2$. Thus, by Lemma \ref{lem:pacbayes} we have simultaneously for all $v \in S^{d-1}$, with probability at least $1 - \delta$,
\begin{align*}
&\lambda\E_{\rho_v}\sqrt{\theta^{\top}\Sigma\theta}\left(\operatorname{Quant}_{\frac{1}{2} + \eps}(S_{N}) -  \E\operatorname{Quant}_{\frac{1}{2} + \eps}(S_{N})\right) 
\\
&\quad\le \E_{\rho_v}\log\E\exp\left(\lambda\sqrt{\theta^{\top}\Sigma\theta}\left(\operatorname{Quant}_{\frac{1}{2} + \eps}(S_{N}) -  \E\operatorname{Quant}_{\frac{1}{2} + \eps}(S_{N})\right)\right) + \beta/2 + \log(1/\delta).
\end{align*}
Since centering multiplies the $\psi_2$-norm by at most an absolute constant factor (see \eg, \cite[Lemma 2.6.8]{Vershynin2016HDP}), we have by Lemma \ref{lem:subGquantiles}, for some absolute constant $c_1 > 0$,
\[
\left\|\operatorname{Quant}_{\frac{1}{2} + \eps}(S_{N}) -  \E\operatorname{Quant}_{\frac{1}{2} + \eps}(S_{N})\right\|_{\psi_2} \le \frac{c_1}{\sqrt{N}}.
\]
Thus, by \cite[Proposition 2.5.2, (v)]{Vershynin2016HDP} (conditioned on $\theta$, we take $\lambda\sqrt{\theta^{\top}\Sigma\theta}$ instead of $\lambda$ in that result), we have for some {absolute constant} $c_2>0$,
\begin{align*}
\E_{\rho_v}\log\E\exp\left(\lambda\sqrt{\theta^{\top}\Sigma\theta}\left(\operatorname{Quant}_{\frac{1}{2} + \eps}(S_{N}) -  \E\operatorname{Quant}_{\frac{1}{2} + \eps}(S_{N})\right)\right) &\le \frac{\E_{\rho_v}c_2\lambda^2\theta^{\top}\Sigma\theta}{N}
\\
&= \frac{c_2\lambda^2(v^{\top}\Sigma v + \beta^{-1}\tr(\Sigma))}{N}
\\
&\le \frac{11c_2\lambda^2\|\Sigma\|}{N},
\end{align*}
where the last lines are based on a direct computation and our choice of $\beta$ (we have $\beta^{-1} \le 10\|\Sigma\|/\tr(\Sigma)$). Optimizing the bound on $\lambda\E_{\rho_v}\sqrt{\theta^{\top}\Sigma\theta}\left(\operatorname{Quant}_{\frac{1}{2} + \eps}(S_{N}) -  \E\operatorname{Quant}_{\frac{1}{2} + \eps}(S_{N})\right)$ with respect to $\lambda > 0$ and since $\beta \le 10\tr(\Sigma)/\|\Sigma\|$, we obtain that uniformly over $S^{d -1}$,
\[
\E_{\rho_v}\sqrt{\theta^{\top}\Sigma\theta}\left(\operatorname{Quant}_{\frac{1}{2} + \eps}(S_{N}) -  \E\operatorname{Quant}_{\frac{1}{2} + \eps}(S_{N})\right) \le c_3\sqrt{\frac{\tr(\Sigma) + \|\Sigma\|\log(1/\delta)}{N}},
\]
where $c_3 > 0$ is an absolute constant. Repeating the proof for $\lambda < 0$ and using the union bound, we extend this bound to the upper bound $(I)$. 

We now focus on bounding $(II)$. Observe that for any scalar $C$, we have $\|C\|_{\psi_2} = |C|/\sqrt{\log 2}$. Using this observation, together with Jensen's inequality and Lemma \ref{lem:subGquantiles} we have for some $c_4 > 0$,
\begin{align*}
\left|\E_{\rho_v}\sqrt{\theta^{\top}\Sigma\theta}\cdot(\E\operatorname{Quant}_{\frac{1}{2} + \eps}(S_{N}) - \Phi^{-1}(1/2 + \eps))\right| &\le \frac{c_4\sqrt{\log 2}\cdot\E_{\rho_v}\sqrt{\theta^{\top}\Sigma\theta}}{\sqrt{N}}
\\
&\le \frac{c_4\sqrt{\log 2}\cdot\sqrt{11\|\Sigma\|}}{\sqrt{N}}.
\end{align*}
Finally, we bound $(III)$. First, we notice that the function $\Phi^{-1}(\cdot)$ is locally Lipschitz on a closed interval $[1/2-\eps, 1/2+\eps]$ for $\eps \in [0, 1/4]$. We compute and bound the local Lipschitz constant of $\Phi^{-1}$ as follows
\[
\frac{d}{dx} \Phi^{-1}(1/2 + x) = \sqrt{2\pi} \exp\Big((\Phi^{-1}(1/2+x))^2/{2}\Big) \le \sqrt{2\pi} \exp\Big((\Phi^{-1}(3/4))^2/{2}\Big) \le 4,
\]
where we used the fact that the function $ \exp(\Phi^{-1}(\cdot)^2)$ is increasing. Since the standard Gaussian distribution is symmetric, we have $\Phi^{-1}(1/2) = 0$. Hence, the bound for $\Phi^{-1}(1/2 + \eps)$ reads as
\[
\Phi^{-1}(1/2 + \eps) = \Phi^{-1}(1/2 + \eps) - \Phi^{-1}(1/2) \le 4 \eps.
\]
Therefore, we have
\[
\E_{\rho_{v}}\sqrt{\theta^{\top}\Sigma\theta}\cdot\Phi^{-1}(1/2 + \eps) \le 4\eps\sqrt{11\|\Sigma\|}.
\]
Combining the above bounds concludes the proof.
\end{proof}
We are now ready to prove our second main result.
\begin{proof}[Proof of Theorem \ref{thm:covarianceestimation}] Recall that $H$ is a covariance matrix of $\rho_v$ and does not depend on direction $v \in S^{d-1}$.
Observe that $\E_{\rho_v}\theta^{\top}\Sigma\theta = v^{\top}\Sigma v + \tr(\Sigma H)$. Moreover, since our choice of parameters implies $p \ge 1/2$ in \eqref{eq:measureforkl}, we have
\[
H \preceq 2\beta^{-1}I_d, \quad \textrm{and} \quad \|H\| \le 2\beta^{-1}.
\]
We also observe that $\mathbf{r}(G) = \tr(G)/\|G\| \le 100\mathbf{r}(\Sigma)$. Using the triangle inequality, as well as the definition of our estimator combined with the definition of the set $\mathcal H$ from \eqref{eq:setH}, we have
\begin{align*}
\|\widehat{\Sigma} - \Sigma\| &= \sup\limits_{v \in S^{d - 1}}\left|v^{\top}{\Sigma} v - v^{\top}\widehat{\Sigma} v\right|
\\
&\le \sup\limits_{v \in S^{d - 1}}\left|\E_{\rho_v}\theta^{\top}({\Sigma} - \widehat{\Sigma})\theta\right| + |(\tr(\widehat{\Sigma}H) - \alpha(H)) - (\tr({\Sigma}H)-\alpha(H))|
\\
&\le \sup\limits_{v \in S^{d - 1}}\left|\E_{\rho_v}\theta^{\top}({\Sigma} - \widehat{\Sigma})\theta\right| +c\tr((\Sigma + \widehat{\Sigma})H)\left(\sqrt{\frac{\max\{\mathbf{r}(G), \mathbf{r}(\Sigma)\} + \log(1/\delta)}{N}} + \eps\right)
\\
&\le \sup\limits_{v \in S^{d - 1}}\left|\E_{\rho_v}\theta^{\top}({\Sigma} - \widehat{\Sigma})\theta\right| +2\beta^{-1}c\tr(\Sigma + 10G)\left(\sqrt{\frac{100\mathbf{r}(\Sigma) + \log(1/\delta)}{N}} + \eps\right)
\\
&\le \sup\limits_{v \in S^{d - 1}}\left|\E_{\rho_v}\theta^{\top}({\Sigma} - \widehat{\Sigma})\theta\right| +202\beta^{-1}c\tr(\Sigma)\left(\sqrt{\frac{100\mathbf{r}(\Sigma) + \log(1/\delta)}{N}} + \eps\right).
\end{align*}
Since $\beta^{-1} \le 10\mathbf{r}(\Sigma)$, the last term in the last inequality is not larger than the rate of convergence in the statement of Theorem \ref{thm:covarianceestimation}. We now can focus only on bounding the first term in the last line of the inequalities from the previous display. We first need some auxiliary computations. Using the definition of the set $\mathcal H$, we have $\rho_v$-almost surely
\begin{align*}
\sqrt{\theta^{\top}{\Sigma} \theta} + \sqrt{\theta^{\top}\widehat{\Sigma} \theta}&\le \sqrt{2v^{\top}{\Sigma} v + 2(\theta - v)^{\top}{\Sigma} (\theta - v)} + \sqrt{2v^{\top}\widehat{\Sigma} v + 2(\theta - v)^{\top}\widehat{\Sigma} (\theta - v)}
\\
&\le \sqrt{2\|\Sigma\| + 20(\theta - v)^{\top}{G} (\theta - v)} + \sqrt{20\omega + 20(\theta - v)^{\top}G (\theta - v)}
\\
&\le \sqrt{2\|\Sigma\| + 20r^2} + \sqrt{20\omega + 20r^2}
\\
&\le c_1\sqrt{\|\Sigma\|},
\end{align*}
where $c_1 > 0$ is some absolute constant.
This implies the following lines
\begin{align*}
&\sup\limits_{v \in S^{d - 1}}\left|\E_{\rho_v}\theta^{\top}({\Sigma} - \widehat{\Sigma})\theta\right| 
\\
&\le \sup\limits_{v \in S^{d - 1}}\E_{\rho_v}\left|\sqrt{\theta^{\top}{\Sigma} \theta} - \sqrt{\theta^{\top}\widehat{\Sigma} \theta}\right|\left|\sqrt{\theta^{\top}{\Sigma} \theta} + \sqrt{\theta^{\top}\widehat{\Sigma} \theta}\right|
\\
&\le c_1\sqrt{\|\Sigma\|}\sup\limits_{v \in S^{d - 1}}\E_{\rho_v}\left|\sqrt{\theta^{\top}{\Sigma} \theta} - \sqrt{\theta^{\top}\widehat{\Sigma} \theta}\right|
\\
&\le c_1\sqrt{\|\Sigma\|}\sup\limits_{v \in S^{d - 1}}\E_{\rho_v}\left|\operatorname{Med}\left(|\langle X_1, \theta\rangle|, \ldots, |\langle X_N, \theta\rangle|\right)/(\Phi^{-1}(3/4)) - \sqrt{\theta^{\top}\widehat{\Sigma}\theta}\right|
\\
&\qquad+ c_1\sqrt{\|\Sigma\|}\sup\limits_{v \in S^{d - 1}}\E_{\rho_v}\left|\operatorname{Med}\left(|\langle X_1, \theta\rangle|, \ldots, |\langle X_N, \theta\rangle|\right)/(\Phi^{-1}(3/4)) - \sqrt{\theta^{\top}\Sigma\theta}\right|
\\
&\le2c_1\sqrt{\|\Sigma\|}\sup\limits_{v \in S^{d - 1}}\E_{\rho_v}\left|\operatorname{Med}\left(|\langle X_1, \theta\rangle|, \ldots, |\langle X_N, \theta\rangle|\right)/(\Phi^{-1}(3/4)) - \sqrt{\theta^{\top}\Sigma\theta}\right|,
\end{align*}
where in the last line we used the definition of $\widehat{\Sigma}$ and that $\Sigma \in \mathcal H$.
We focus on upper bounding the last expression. 
Let $Y_1, \ldots, Y_N$ be the uncontaminated version of our $\varepsilon$-contaminated sample. Using the same argument as in the proof of Theorem \ref{thm:maintheorem}, we have
\begin{align*}
&\sup\limits_{v \in S^{d - 1}}\E_{\rho_v}\left|\operatorname{Med}\left(|\langle X_1, \theta\rangle|, \ldots, |\langle X_N, \theta\rangle|\right) - \Phi^{-1}(3/4)\sqrt{\theta^{\top}\Sigma\theta}\right|
\\
&\le\sup\limits_{v \in S^{d - 1}}\E_{\rho_v}\left|\operatorname{Quant}_{\frac{1}{2} + \eps}\left(|\langle Y_1, \theta\rangle|, \ldots, |\langle Y_N, \theta\rangle|\right) - \Phi^{-1}(3/4)\sqrt{\theta^{\top}\Sigma\theta}\right|
\\
&\qquad+ \sup\limits_{v \in S^{d - 1}}\E_{\rho_v}\left|\operatorname{Quant}_{\frac{1}{2} - \eps}\left(|\langle Y_1, \theta\rangle|, \ldots, |\langle Y_N, \theta\rangle|\right) - \Phi^{-1}(3/4)\sqrt{\theta^{\top}\Sigma\theta}\right|.
\end{align*}
We only analyze the first term. Observe that due to the spherical symmetry, we have that $S_{N} = \{|\langle Y_1, \theta \rangle|/\sqrt{\theta^{\top}\Sigma\theta}\ , \ldots,\  |\langle Y_{N}, \theta \rangle|/\sqrt{\theta^{\top}\Sigma\theta}\}$ consists of independent half-normal random variables (in our case, $\theta \neq 0$ almost surely). By the triangle inequality, we have
\begin{align*}
&\sup\limits_{v \in S^{d - 1}}\E_{\rho_v}\sqrt{\theta^{\top}\Sigma\theta}\left|\operatorname{Quant}_{\frac{1}{2} + \eps}\left(S_N\right) - \Phi^{-1}(3/4)\right|
\\
&\le \sup\limits_{v \in S^{d - 1}}\E_{\rho_v}\sqrt{\theta^{\top}\Sigma\theta}\left(\left|\operatorname{Quant}_{\frac{1}{2} + \eps}\left(S_N\right) -\Phi^{-1}_{\operatorname{H}}(1/2 + \eps)\right| - \E\left|\operatorname{Quant}_{\frac{1}{2} + \eps}\left(S_N\right) -\Phi^{-1}_{\operatorname{H}}(1/2 + \eps)\right|\right)
\\
&\qquad+\sup\limits_{v \in S^{d - 1}}\E_{\rho_v}\E\sqrt{\theta^{\top}\Sigma\theta}\left|\operatorname{Quant}_{\frac{1}{2} + \eps}\left(S_N\right) -\Phi^{-1}_{\operatorname{H}}(1/2 + \eps)\right|
\\
&\qquad+\sup\limits_{v \in S^{d - 1}}\E_{\rho_v}\sqrt{\theta^{\top}\Sigma\theta}\left|\Phi^{-1}(3/4) -\Phi^{-1}_{\operatorname{H}}(1/2 + \eps)\right|
\\
&= (I) + (II) + (III).
\end{align*}
We want to apply Lemma \ref{lem:pacbayes} to control $(I)$. Fix $\lambda > 0$ and let $\gamma$ be a multivariate Gaussian distribution in $\mathbb{R}^d$ with zero mean and covariance $\beta^{-1}I_d$. Lemma \ref{lem:kllemma} implies that for our choice of parameters $\mathcal{KL}(\rho_v, \gamma) \le \log(2)+\beta/2$. Denote 
\[
Q(S_N) = \left|\operatorname{Quant}_{\frac{1}{2} + \eps}\left(S_N\right) -\Phi^{-1}_{\operatorname{H}}(1/2 + \eps)\right| - \E\left|\operatorname{Quant}_{\frac{1}{2} + \eps}\left(S_N\right) -\Phi^{-1}_{\operatorname{H}}(1/2 + \eps)\right|.
\]
Observe that conditioned on $\theta$, the random variable $Q(S_N)$ is a centered version of the random variable $|\operatorname{Quant}_{\frac{1}{2} + \eps}\left(S_N\right) -\Phi^{-1}_{\operatorname{H}}(1/2 + \eps)|$ whose $\|\cdot\|_{\psi_2}$ is controlled by Lemma \ref{lem:orderstatsofhalfnormal}. Since centering multiplies the $\psi_2$-norm by at most an absolute constant factor, we have (conditioned on $\theta$) that $\left\|Q(S_N)\right\|_{\psi_2} \le \frac{c_2}{\sqrt{N}}$ for some absolute constant $c_2 > 0$. By Lemma \ref{lem:pacbayes} we have, simultaneously for all $v \in S^{d-1}$, with probability at least $1 - \delta$,
\begin{align*}
&\lambda\E_{\rho_{v}}\sqrt{\theta^{\top}\Sigma\theta}\ Q(S_N) \le\E_{\rho_v}\log\E\exp\left(\lambda\sqrt{\theta^{\top}\Sigma\theta}\ Q(S_N)\right) +  \beta/2 + \log(2/\delta).
\end{align*}
Thus, by \cite[Proposition 2.5.2, (v)]{Vershynin2016HDP} (conditioned on $\theta$, we take $\lambda\sqrt{\theta^{\top}\Sigma\theta}$ instead of $\lambda$ in that result), repeating the lines of the proof of Theorem \ref{thm:maintheorem}, we have for some absolute constants $c_3, c_4>0$,
\[
\E_{\rho_v}\log\E\exp\Big(\lambda\sqrt{\theta^{\top}\Sigma\theta}\ Q(S_N)\Big) \le \E_{\rho_v}\frac{c_3\lambda^2\theta^{\top}\Sigma\theta}{N} \le \frac{c_4\lambda^2\|\Sigma\|}{N}.
\]
Combining the bounds and optimizing with respect to $\lambda$, we have simultaneously for all $v \in S^{d-1}$, with probability at least $1 - \delta$,
\[
(I) \le c_5{\|\Sigma\|^{1/2}}\Bigg(\sqrt{\frac{\mathbf{r}(\Sigma)}{N}} + \sqrt{\frac{\log(1/\delta)}{N}}\Bigg),
\]
where $c_5 > 0$ is some absolute constant. We now bound the term $(II)$. Similarly to the proof of Theorem \ref{thm:maintheorem}, we use Lemma \ref{lem:orderstatsofhalfnormal} to get, for some absolute constant $c_6 > 0$, the following bound
\[
\E_{\rho_v}\E\sqrt{\theta^{\top}\Sigma\theta}\left|\operatorname{Quant}_{\frac{1}{2} + \eps}\left(S_N\right) -\Phi^{-1}_{\operatorname{H}}(1/2 + \eps)\right| \le c_6\sqrt{\frac{\|\Sigma\|}{N}}.
\]
To bound $(III)$ we first observe that $\Phi^{-1}_{\operatorname{H}}(1/2) = \Phi^{-1}(3/4)$. Now we show that the difference $\Phi_{\operatorname{H}}^{-1}(1/2 + \eps) - \Phi^{-1}_{\operatorname{H}}(1/2)$ is bounded by $\eps$ (up to multiplicative constant) for $\eps \in [0, 1/4]$. Similarly to the arguments used in the proof of Theorem \ref{thm:maintheorem} for the quantile function of standard Gaussian distribution, we compute and bound the derivative of $\Phi_{\operatorname{H}}^{-1}(1/2+x)$ when $x \in [0, 1/4]$ as follows 
\[
\frac{d}{dx} \Phi_{\operatorname{H}}^{-1}(1/2 + x) = \sqrt{\frac{\pi}{2}}
\exp\Big((\Phi^{-1}_{\operatorname{H}}(1/2+x))^2/{2}\Big) \le \sqrt{\frac{\pi}{2}} \exp\Big((\Phi^{-1}_{\operatorname{H}}(3/4))^2/{2}\Big) \le 3.
\]
Therefore, we have for some $c_7 > 0$,
\[
\E_{\rho_{v}}\sqrt{\theta^{\top}\Sigma\theta}\cdot\big|\Phi_{\operatorname{H}}^{-1}(1/2 + \eps) - \Phi_{\operatorname{H}}^{-1}(1/2)\big|\le c_7\eps\sqrt{\|\Sigma\|}.
\]
Combining the obtained bounds, we complete the proof.
\end{proof}
\paragraph{Statistical optimality of our estimators.}
We shortly discuss the claimed optimality of our bounds. The optimality results follow immediately from existing lower bounds. The bounds in \cite[Theorem 2.2 and Theorem 3.2]{chen2018robust} show that Theorem \ref{thm:maintheorem} and Theorem \ref{thm:covarianceestimation} both have the optimal dependence on the contamination level with correct dimension-free parametric rate. For covariance estimation, the optimality of the remaining terms is described in detail in \cite[Section 5]{abdalla2022covariance}. Matching lower bounds for the mean estimation problem
are shown in \cite{lugosi2019near}.

\section{Tuning the unknown parameters}
\label{sec:tuning}
Our focus is now on tuning a few parameters used in our estimators. For the sake of simplicity, we assume that either $\eps$ is known exactly or a known upper bound $\varepsilon_0$ is available, such that $\varepsilon \le \varepsilon_0 < 1/2$. This is a standard assumption in the literature \cite{lugosi2021robust}. Observe that at least in mean estimation the value of $\eps$ is only used to tune the parameter $\beta$.
The most standard approach to estimating other parameters is the sample-splitting idea. One splits the sample into several independent blocks of equal sizes. For each block, we can bound the number of contaminated points. This will allow us to state our result for any $\eps \in [0, c]$, where $c$ is some small enough absolute constant. An interesting aspect of our analysis is that we can tune different parameters on the same sample. We will now discuss this in more details.

\paragraph{Handling the dependencies.} It is clear that in the strong contamination setup, the adversary can make the aforementioned blocks dependent. That is, the outliers in any sub-sample may depend on the entire sample. Some authors assume implicitly that the splitting of the sample results in independent subsamples. For example, the analysis of the trimmed-mean estimator in \cite[Theorem 1]{lugosi2021robust} uses this independence, which holds, for example, in Huber's contamination model, but is not true in the general strong contamination model. Taking care of the sample splitting step in the strong contamination model requires some additional stability-type analysis. We refer to \cite[Section 6]{diakonikolas2022outlier} as an example of this approach.

We now show that our approach allows one to tune the parameters on the same sample. Thus, our result is valid in the strong contamination model without additional assumptions. For clarity, we only focus on the mean estimation problem. Assume we are given an $\eps$-contaminated sample of size $N$. We denote it by $S_{N}$. Given $S_{N}$, we first find an integer $\beta = \beta(S_N)$ satisfying, with probability at least $1 - \delta/2$,
\begin{equation}
\label{eq:betaeq}
\mathbf{r}(\Sigma)/10 \le \beta(S_N) \le 10\mathbf{r}(\Sigma).
\end{equation}
We then compute our estimator defined in \eqref{eq:ourestimator} on the same sample $S_N$ with $\beta = \beta(S_N)$. Denote the event where \eqref{eq:betaeq} holds by $E$. We show that due to the nature of Lemma \ref{lem:pacbayes}, this dependence does not lead to additional technical issues. First, observe that since $\beta$ is an integer, we can use the union bound over at most $10\mathbf{r}(\Sigma)$ prior Gaussian distributions $\gamma$ to handle potential dependence of $\beta$ on $S_{N}$. One can verify that this application of the union bound does not change the bound of Theorem \ref{thm:maintheorem}. Importantly, the result of Lemma \ref{lem:pacbayes} is uniform with respect to the posterior distribution $\rho_v$ and allows $\beta$ to depend on the sample as long as $\mathcal{KL}(\rho_v, \gamma) = \beta(S_N)/2 \le 5\mathbf{r}(\Sigma)$, which holds on the event $E$. Finally, one can easily verify that, on the same event $E$, the desired upper bound on the term
\[
\E_{\rho_v}\log\E\exp\left(\lambda\sqrt{\theta^{\top}\Sigma\theta}\left(\operatorname{Quant}_{\frac{1}{2} \pm \eps}(S_{N}) -  \E\operatorname{Quant}_{\frac{1}{2} \pm \eps}(S_{N})\right)\right),
\]
appearing in the proof of Theorem \ref{thm:maintheorem} is not affected by the fact that $\beta = \beta(S_N)$. This argument allows us to use $\beta(S_N)$ in our estimator. 
 
Similar ideas can also be applied in the covariance estimation setup. To avoid unnecessary technicalities, we assume that for covariance estimation we can indeed split the sample into several blocks and the adversary is not allowed to create dependencies between these blocks. This covers many standard contamination models, including Huber's $\varepsilon$-contamination model. 

\paragraph{Estimating $\beta$ and $\omega$.}
This step follows from existing results. In particular, in the Gaussian case Proposition 6 in \cite{abdalla2022covariance} provides an estimator $\omega$ satisfying $\|\Sigma\|/4 \le \omega \le 4\|\Sigma\|$ whenever $N \ge c(\mathbf{r}(\Sigma) + \log(1/\delta))$, where $c > 0$ is an absolute constant. We also need to estimate $\tr(\Sigma)$. This problem reduces to mean estimation. The linear dependence on $\eps$ will not play any role since we only need to know  $\tr(\Sigma)$ up to a multiplicative constant factor. In particular, one can use any \emph{sub-Gaussian mean estimator} in $\mathbb{R}$ (see \cite{lugosi2019mean} for the exact definition) that is tolerant to strong contamination and gives a $\sqrt{\eps}$-dependence on the contamination level to find $\tau$ satisfying $\tr(\Sigma)/2 \le \tau \le 2\tr(\Sigma)$, whenever $\eps$ is small enough and $N \ge c \log(1/\delta)$. This allows us to find an integer $\beta$ satisfying \eqref{eq:betaeq}.

\paragraph{Constructing the matrix $G$.}

We discuss how to construct a positive semi-definite matrix $G$, satisfying 
\begin{equation}
\label{eq:secondmatrixg}
\Sigma \preceq 10G, \quad \textrm{and} \quad \tr(G) \le 10\tr(\Sigma).
\end{equation}
The following result allows us to construct such a matrix efficiently whenever $N \ge c(d + \log(1/\delta))$, where $c > 0$ is some absolute constant.
\begin{proposition}
There are absolute constants $c, c_1 > 0$ such that the following holds. Assume that $X$ is a Gaussian zero mean vector in $\mathbb{R}^d$ with covariance $\Sigma$. Let $X_1, \ldots, X_N$ be an $\eps$-contaminated set of independent copies of $X$. Fix $\delta \in (0, 1)$. Assume that $\eps \le c$ and $N \ge c_1(d + \log(1/\delta))$. Then, with probability at least $1 - \delta$, simultaneously for all $I^{\prime} \subseteq [N]$ such that $|I^{\prime}| = cN$, we have 
\[
\Sigma \preceq \frac{10}{N}\sum\limits_{i \in [N]\setminus I^{\prime}}X_iX_i^{\top}.
\]
Moreover, on the same event, there exists $I \subseteq [N]$ such that $|I| = cN$, and
\[
\frac{1}{N}\sum\limits_{i \in [N]\setminus I}\|X_i\|^2 \le 10\tr(\Sigma).
\]
\end{proposition}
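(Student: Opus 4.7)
The plan is to reduce both claims to structural properties of the uncontaminated Gaussian sample $Y_1,\ldots,Y_N$, exploiting that $X_i=Y_i$ on a clean set $S\subseteq[N]$ with $|S|\ge(1-\eps)N$.

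For the first claim, fix any $I'\subseteq[N]$ with $|I'|=cN$ and let $J:=([N]\setminus I')\cap S$. Then $|J|\ge(1-c-\eps)N$ and
\[
\tfrac{1}{N}\sum_{i\in[N]\setminus I'} X_iX_i^\top \succeq \tfrac{1}{N}\sum_{i\in J} Y_iY_i^\top,
\]
so it suffices to prove the uniform Gaussian resilience statement: with probability $\ge 1-\delta$, for every $J\subseteq[N]$ with $|J|\ge(1-c-\eps)N$, $\tfrac{1}{N}\sum_{i\in J} Y_iY_i^\top \succeq \tfrac{1}{10}\Sigma$. Restricting to the range of $\Sigma$ and whitening $\widetilde Y_i:=\Sigma^{-1/2}Y_i$ (iid standard Gaussian), this is equivalent to the analogous statement with $\Sigma$ replaced by $I_d$. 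The Koltchinskii--Lounici inequality \eqref{eq:samplecov} gives $\tfrac{1}{N}\sum_i\widetilde Y_i\widetilde Y_i^\top\succeq 0.9\,I_d$ when $N\ge c_1(d+\log(1/\delta))$, so it remains to prove the uniform subset upper bound
\[
\sup_{|K|\le(c+\eps)N}\bigl\|\tfrac{1}{N}\sum_{i\in K}\widetilde Y_i\widetilde Y_i^\top\bigr\|\le \tfrac{4}{5}.
\]

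The main obstacle is this uniform subset operator-norm bound. The plan is a two-fold union bound: over a standard $1/4$-net $\mathcal N\subseteq S^{d-1}$ with $\log|\mathcal N|\lesssim d$, and over all subsets $K\subseteq[N]$ of size $kN$ with $k=c+\eps$, using $\log\binom{N}{kN}\le kN\log(e/k)$. For fixed $(v,K)$, the variable $\sum_{i\in K}\langle\widetilde Y_i,v\rangle^2$ is $\chi^2_{kN}$, so by Laurent--Massart, $\tfrac{1}{N}\sum_{i\in K}\langle\widetilde Y_i,v\rangle^2\le k+2\sqrt{kt/N}+2t/N$ with probability $\ge 1-e^{-t}$. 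Taking $t$ of order $d+kN\log(e/k)+\log(1/\delta)$ to absorb both union bounds and using $N\ge c_1(d+\log(1/\delta))$, the right-hand side is $O\bigl(k\log(e/k)\bigr)$; choosing the absolute constant $c$ sufficiently small makes this $\le 4/5$. Converting from $\mathcal N$ to $S^{d-1}$ costs only a constant factor, and combining with the Koltchinskii--Lounici lower bound yields $\Sigma\preceq 10G$ uniformly over $I'$.

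For the second claim, the plan is to take $I$ to be the indices of the $cN$ largest values of $\|X_i\|^2$ and split $[N]\setminus I$ into its clean part $([N]\setminus I)\cap S$ and adversarial part $([N]\setminus I)\cap S^c$. On the clean part $\|X_i\|^2=\|Y_i\|^2$; Laurent--Massart applied to $\sum_{i\in S}\|Y_i\|^2$ (a sum of independent sub-exponential random variables with mean at most $N\tr(\Sigma)$) gives $\tfrac{1}{N}\sum_{i\in S}\|Y_i\|^2\le 2\tr(\Sigma)$ under our sample-size assumption. On the adversarial part, each $\|X_i\|^2$ is bounded by the $(cN)$-th largest of $\{\|X_j\|^2\}$; since $c>\eps$, at least $(c-\eps)N$ of the top $cN$ indices are clean, so this value is at most the $((c-\eps)N)$-th largest of $\{\|Y_i\|^2\}_{i\in S}$. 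Standard quantile concentration for $\|Y\|^2$ (mean $\tr(\Sigma)$, sub-exponential fluctuations of order $\|\Sigma\|\le\tr(\Sigma)$) shows this is at most $C\tr(\Sigma)$ for an absolute $C$, so the adversarial contribution is $\le C\eps N\tr(\Sigma)$ and the total is $\le(2+C\eps)N\tr(\Sigma)\le 10N\tr(\Sigma)$ for $\eps$ sufficiently small. A union bound over the finitely many concentration events involved completes the argument on a single event of probability $\ge 1-\delta$.
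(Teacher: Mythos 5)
Your proposal is correct in its main thrust but follows a genuinely different route from the paper in both halves. For the spectral lower bound, the paper fixes a set $I$ of size $2cN$ containing both the outliers and $I'$, applies Oliveira's lower-tail bound for the Gaussian sample covariance to the clean subsample indexed by $[N]\setminus I$, and takes a union bound over the $\binom{N}{2cN}$ choices of $I$; positive semi-definiteness of each $Y_iY_i^{\top}$ then transfers the bound to the contaminated sums. You instead write the restricted sum as the full clean sample covariance minus a small-subset sum, use \eqref{eq:samplecov} for the full sample, and prove the uniform small-subset upper bound via a $1/4$-net, a union bound over subsets, and Laurent--Massart; this is the standard resilience argument and goes through under $N \ge c_1(d + \log(1/\delta))$ with $c$ small and $c_1$ large (note that the $d/N$ and $\log(1/\delta)/N$ contributions are absorbed by taking $c_1$ large, not by the $k\log(e/k)$ term). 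The trade-off is that the paper outsources the dimension-dependent lower tail to a single black-box result and avoids the net argument, while your version is self-contained modulo standard $\chi^2$ bounds.

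For the second claim there is one step to repair. The paper simply takes $I$ to be (a padding of) the set of contaminated indices, so the remaining sum is purely Gaussian and a single Bernstein-type bound, valid uniformly since removing points only decreases the sum, finishes the proof. Your choice of $I$ as the $cN$ largest norms is the algorithmic one mentioned after the proposition, but your control of the surviving adversarial points rests on the step \say{since $c > \eps$, at least $(c-\eps)N$ of the top $cN$ indices are clean}, and the constant in the ensuing quantile bound blows up as $c - \eps \to 0$; since the statement uses the same constant $c$ for the contamination level and for $|I|$, the regime $\eps$ close to $c$ is not covered as written. The fix is easy: if $m$ adversarial points survive outside $I$, then $I$ contains at least $m$ clean points, so each survivor's squared norm is at most the $m$-th largest clean squared norm, and $m$ times that value is at most the total clean sum $\sum_{i \in S}\|Y_i\|^2$, which you already bounded by $2N\tr(\Sigma)$; alternatively, take $I$ to be the contaminated set as in the paper, since only existence of $I$ is claimed.
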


This result implies immediately that the matrix $G = \frac{1}{N}\sum\nolimits_{i \in [N]\setminus I}X_iX_i^{\top}$ satisfies the desired property \eqref{eq:secondmatrixg}. In order to find this set, one only needs to find a set $I$ of size $\eps N$ such that $\sum\nolimits_{i \in [N]\setminus I}\|X_i\|^2 \le 10N\tr(\Sigma)$. This can be done simply by removing the $\eps N$ vectors with the largest norms. 

\begin{proof}
Without loss of generality, we assume that $cN$ is an integer. Fix any $I \subset [N]$ of size $2cN$. Let $Y_1, \ldots, Y_N$ denote an uncontaminated sample. 
The total number of such subsets is upper bounded by $\binom{N}{2cn} \le \left(\frac{2e}{c}\right)^{2cN}$.
By the bound of Oliveira \cite[Theorem 4.1 with $\mathrm{h} = 3$]{oliveira2016lower} and the union bound over all sets $I$ of size $2cN$, we have
\[
\Sigma\left(1 - 27\sqrt{\frac{d + 4cN\log(\frac{2e}{c})+  2\log(2/\delta)}{N - 2cN}}\right) \preceq \frac{1}{N - 2cN}\sum\limits_{i \in [N]\setminus I}Y_iY_i^{\top}.
\]
When $c$ is small enough and $N \ge c_1(d + \log(1/\delta))$ for large enough $c_1 > 0$, on the same event, we have 
\[
\Sigma \preceq \frac{10}{N}\sum\limits_{i \in [N]\setminus I}Y_iY_i^{\top}.
\]
Observe that since each term $Y_iY_i^{\top}$ is a positive semi-definite matrix and $\eps \le c$, we have that for any $I^{\prime}$ of size $cN$, there is a set $I$ of size $2cN$  such that
\[
\sum\nolimits_{i \in [N]\setminus I}Y_iY_i^{\top} \preceq \sum\nolimits_{i \in [N]\setminus I^{\prime}}X_iX_i^{\top}.
\]
Indeed, to build such a set $I$ we consider the union of the set of contaminated points with the set $I^{\prime}$ (we can add any additional elements if the cardinality of this union is less than $2cN$).
This implies that under our assumption for all $I^{\prime} \subset [N]$ of size $cN$, with probability at least $1 - \delta$,
\[
\Sigma \preceq \frac{10}{N}\sum\nolimits_{i \in [N]\setminus I^{\prime}}X_iX_i^{\top}.
\]
We now consider the second part of the statement. Combining the Gaussian concentration inequality \cite[Example 5.7]{boucheron2013concentration} and \cite[Proposition 2.5.2]{Vershynin2016HDP}, we get that there is an absolute constant $c_2> 0$ such that
\[
\bigl\|\|X\| - \E\|X\|\bigr\|_{\psi_2} \le c_2\sqrt{\|\Sigma\|}.
\]
It is now standard to verify that $\bigl\|\|X\|^2 - \E\|X\|^2\bigr\|_{\psi_1} \le c_3\|\Sigma\|$, where $c_3>0$ is an absolute constant. By the Bernstein inequality \cite[Theorem 2.8.1]{Vershynin2016HDP} and the union bound, simultaneously for all $I \subset [N], |I| = cN$, with probability at least $1 - \delta$, it holds for some absolute constant $c_4 > 0$ that
\begin{align*}
\sum\limits_{i \in [N]\setminus I}\|Y_i\|^2 &\le N\tr(\Sigma) +c_4\|\Sigma\|\left(\sqrt{N(\log(1/\delta) + cN\log(e/c))} + \log(1/\delta) + cN\log(e/c)\right)
\\
&\le 10N\tr(\Sigma).
\end{align*}
The last inequality holds provided that $c$ is small enough and $c_1$ is large enough. We choose $I$ to be the set corresponding to the set of contaminated points. For this set $I$, on the same event, we have 
\[
\frac{1}{N}\sum\limits_{i \in [N]\setminus I}\|X_i\|^2 = \frac{1}{N}\sum\limits_{i \in [N]\setminus I}\|Y_i\|^2 \le 10\tr(\Sigma).
\]
The claim follows by the union bound.
\end{proof}

\paragraph{Estimating $\alpha(H)$.}
We conclude by the analysis of a real-valued parameter $\alpha = \alpha(H)$, defined in \eqref{eq:alphah}.
In what follows, $H$ is a known positive semi-definite matrix. When allowing slightly sub-optimal dependence on $\eps$, we can use the analysis of the trimmed mean estimator in $\mathbb{R}$ (see \cite[Theorem 1]{lugosi2021robust}). Unfortunately, the analysis becomes more complicated when the linear dependence on the contamination level is of interest. Recall that we are interested in finding $\alpha=\alpha(H)$ such that, with probability at least $1-\delta$,
\[
|\alpha - \tr(\Sigma H)| \le c\tr(\Sigma H)\left(\sqrt{\frac{\mathbf{r}(\Sigma) + \log(1/\delta)}{N}} + \eps\right).
\]
We present an estimator that achieves this error rate in almost any interesting regime. More precisely, we will either make an additional assumption that $\delta \ge \exp(-\sqrt{\mathbf{r}(\Sigma)})$, or that $\log d \le \mathbf{r}(\Sigma)$. In what follows, $e_1, \ldots, e_d$ denotes the standard basis in $\mathbb{R}^d$.
\begin{proposition}
There are absolute constants $c, c_1, c_2 > 0$ such that the following holds. Assume that $X$ is a Gaussian zero mean vector in $\mathbb{R}^d$ with covariance $\Sigma$. Let $X_1, \ldots, X_N$ be an $\eps$-contaminated set of independent copies of $X$. Fix $\delta \in (0, 1)$. Assume that $\eps \le c$ and $N \ge c_1\log(1/\delta)$. Then, with probability at least $1 - \delta$, it holds
\begin{align}
&\left|(\Phi^{-1}(3/4))^{-2}\sum\limits_{i = 1}^d\operatorname{Med}\left(\langle e_i, H^{1/2}X_1 \rangle^2, \ldots,\langle e_i, H^{1/2}X_N \rangle^2\right) - \tr(\Sigma H)\right| \nonumber
\\
&\qquad\qquad\le c_1\tr(\Sigma H)\left(\frac{\log(1/\delta)}{\sqrt{N}} + \eps\right).
\label{eq:firstineq}
\end{align}
If additionally, $N \ge c_2(\log d + \log(1/\delta))$, then on the same event, it holds
\begin{align*}
&\left|(\Phi^{-1}(3/4))^{-2}\sum\limits_{i = 1}^d\operatorname{Med}\left(\langle e_i, H^{1/2}X_1 \rangle^2, \ldots,\langle e_i, H^{1/2}X_N \rangle^2\right) - \tr(\Sigma H)\right|
\\
&\qquad\qquad\le c_1\tr(\Sigma H)\left(\sqrt{\frac{\log d + \log(1/\delta)}{N}} + \eps\right).
\end{align*}
\end{proposition}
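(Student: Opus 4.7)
The strategy is to handle each coordinate $i\in\{1,\ldots,d\}$ separately and then aggregate. Fix $i$, write $\sigma_i^2 = e_i^\top H^{1/2}\Sigma H^{1/2} e_i$, and let $Y_1,\ldots,Y_N$ denote the underlying clean Gaussian sample. Then $\langle e_i, H^{1/2}Y_j\rangle^2/\sigma_i^2$ are i.i.d.\ $\chi^2_1$, and $F^{-1}_{\chi^2_1}(1/2)=(\Phi^{-1}(3/4))^2$. Since the sample median of the contaminated observations is sandwiched between the $(1/2 - \eps)$- and $(1/2 + \eps)$-quantiles of the clean observations (as in the proofs of Theorems \ref{thm:maintheorem} and \ref{thm:covarianceestimation}), it suffices to control, for each $i$, the deviation of $\sigma_i^{-2} \cdot$(clean $(1/2 \pm \eps)$-quantile of squared projections on $e_i$) from $F^{-1}_{\chi^2_1}(1/2)$. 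Split this deviation as
\[
\sigma_i^{-2}\bigl|Y^{(i)}_{((1/2\pm\eps)N)} - \sigma_i^2 F^{-1}_{\chi^2_1}(1/2\pm\eps)\bigr| + \bigl|F^{-1}_{\chi^2_1}(1/2\pm\eps) - F^{-1}_{\chi^2_1}(1/2)\bigr|.
\]
Lemma \ref{lem:orderstatofchisq} controls the first (random) term with $\psi_1$-norm at most $c/\sqrt{N}$. The second (deterministic) term is $O(\eps)$ because the density of $\chi^2_1$ at $F^{-1}_{\chi^2_1}(1/2) = (\Phi^{-1}(3/4))^2$ is a positive absolute constant, so $F^{-1}_{\chi^2_1}$ is Lipschitz on $[1/4,3/4]$.

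For the weaker first inequality \eqref{eq:firstineq}, simply apply the triangle inequality at the level of $\psi_1$-norms across the $d$ coordinates: regardless of joint dependence,
\[
\Bigl\|\sum_{i=1}^d \bigl(Y^{(i)}_{((1/2\pm\eps)N)} - \sigma_i^2 F^{-1}_{\chi^2_1}(1/2\pm\eps)\bigr)\Bigr\|_{\psi_1} \le \frac{c}{\sqrt{N}}\sum_{i=1}^d \sigma_i^2 = \frac{c\,\tr(\Sigma H)}{\sqrt{N}},
\]
and convert this $\psi_1$ bound to a deviation bound at confidence $1-\delta$, producing the $\tr(\Sigma H)\log(1/\delta)/\sqrt{N}$ term. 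The $\eps$ bias contributes $O(\eps\,\tr(\Sigma H))$ after summation. This establishes \eqref{eq:firstineq} under the mild assumption $N \ge c_1\log(1/\delta)$.

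For the refined rate $\sqrt{(\log d + \log(1/\delta))/N}$, exploit the fact that for large enough $N$ the mixed sub-Gaussian/sub-exponential tail from Lemma \ref{lem:orderstatofchisq} is in the sub-Gaussian regime. Concretely, for each $i$ and any $t \le 1$,
\[
\Pr\Bigl(\bigl|Y^{(i)}_{((1/2\pm\eps)N)} - \sigma_i^2 F^{-1}_{\chi^2_1}(1/2\pm\eps)\bigr| \ge t\sigma_i^2\Bigr) \le 2\exp(-cNt^2),
\]
and choosing $t = C\sqrt{(\log d + \log(1/\delta))/N}$ (which is at most $1$ under the assumption $N \ge c_2(\log d + \log(1/\delta))$) together with a union bound over $i$ yields, with probability at least $1-\delta$,
\[
\max_i \sigma_i^{-2}\bigl|Y^{(i)}_{((1/2\pm\eps)N)} - \sigma_i^2 F^{-1}_{\chi^2_1}(1/2\pm\eps)\bigr| \le C\sqrt{(\log d + \log(1/\delta))/N}.
\]
Summing coordinate-wise and combining with the same $O(\eps\,\tr(\Sigma H))$ bias term and the identity $\sum_i\sigma_i^2 = \tr(\Sigma H)$ gives the second inequality. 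Finally, dividing by $(\Phi^{-1}(3/4))^2$ only absorbs an absolute constant.

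The only mild obstacle is that the coordinatewise medians are jointly dependent, which prevents a direct Bernstein-type concentration for the sum; the remedy above (triangle inequality on $\psi_1$-norms for the weak bound, uniform control of the maximum for the strong bound) is what avoids this issue, at the cost of the stated regime restriction $N \ge c_2(\log d + \log(1/\delta))$ for the sharper rate.
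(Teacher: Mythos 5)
Your proposal is correct and, for the first (weak) bound, essentially identical to the paper's argument: sandwich the contaminated median between the clean $(1/2\pm\eps)$-quantiles, control the random part via the $\psi_1$-bound from Lemma~\ref{lem:orderstatofchisq} and a triangle inequality on $\psi_1$-norms across coordinates, and bound the deterministic $\eps$-bias via a Lipschitz estimate on $F^{-1}_{\chi^2_1}$ near $1/2$.

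For the sharper rate $\sqrt{(\log d+\log(1/\delta))/N}$, you take a genuinely different route. You reopen the proof of Lemma~\ref{lem:orderstatofchisq} and observe that the mixed tail $\exp(-cN\min\{t,t^2\})$ is sub-Gaussian once $t$ is below an absolute constant, which is exactly the regime enforced by $N\ge c_2(\log d+\log(1/\delta))$; a union bound over the $d$ coordinates then gives the uniform control. The paper instead keeps the statement-level sub-exponential bound for the $\chi^2_1$ quantiles but rewrites each fluctuation term $(I)_i$ as a product $|a-b|\,|a+b|$ with $a=\operatorname{Quant}_{1/2+\eps}(S'_{N,i})$ (half-normal quantile) and $b=\sqrt{\E\operatorname{Quant}_{1/2+\eps}(S_{N,i})}$: the factor $|a+b|$ is bounded by an absolute constant with high probability, and $|a-b|$ inherits a sub-Gaussian $\psi_2$-bound directly from Lemma~\ref{lem:orderstatsofhalfnormal}. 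Both routes are sound; yours is arguably shorter but requires invoking an intermediate tail estimate that is only implicit in the proof of Lemma~\ref{lem:orderstatofchisq}, whereas the paper's reduction to the half-normal case lets it cite the stated $\psi_2$-norm of Lemma~\ref{lem:orderstatsofhalfnormal} as a black box.
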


\begin{proof} Let $Y_1, \ldots, Y_N$ denote the uncontaminated sample, and let $Y$ be a zero mean Gaussian in $\mathbb{R}^d$ with covariance $\Sigma$.  Since $\tr(\Sigma H) = \tr(H^{1/2}\Sigma H^{1/2})$, by triangle inequality and the arguments of the proof of Theorem \ref{thm:maintheorem}, we have
\begin{align*}
&\left|\sum\limits_{i = 1}^d\operatorname{Med}\left(\langle e_i, H^{1/2}X_1 \rangle^2, \ldots,\langle e_i, H^{1/2}X_N \rangle^2\right) - (\Phi^{-1}(3/4))^{2}\tr(\Sigma H)\right|
\\
&\le \sum\limits_{i = 1}^d\left|\operatorname{Med}\left(\langle e_i, H^{1/2}X_1 \rangle^2, \ldots,\langle e_i, H^{1/2}X_N \rangle^2\right) - (\Phi^{-1}(3/4))^{2}\|\Sigma^{1/2} H^{1/2} e_i\|^{2}\right|
\\
&\le \sum\limits_{i = 1}^d\left|\operatorname{Quant}_{1/2 + \eps}\left(\langle e_i, H^{1/2}Y_1 \rangle^2, \ldots,\langle e_i, H^{1/2}Y_N \rangle^2\right) - (\Phi^{-1}(3/4))^{2}\|\Sigma^{1/2} H^{1/2} e_i\|^{2}\right|
\\
&\qquad+ \sum\limits_{i = 1}^d\left|\operatorname{Quant}_{1/2 - \eps}\left(\langle e_i, H^{1/2}Y_1 \rangle^2, \ldots,\langle e_i, H^{1/2}Y_N \rangle^2\right) - (\Phi^{-1}(3/4))^{2}\|\Sigma^{1/2} H^{1/2} e_i\|^{2}\right|.
\end{align*}
We only consider the first sum, as the second sum is analyzed similarly. Observe that by the spherical symmetry the random variable 
$
\langle e_i, H^{1/2}Y \rangle^2/\|\Sigma^{1/2} H^{1/2} e_i\|^{2}
$ is distributed according to the $\chi^2_1$ distribution. Denote 
\[
S_{N, i} = \{\langle e_i, H^{1/2}Y_1 \rangle^2/\|\Sigma^{1/2} H^{1/2} e_i\|^{2}, \ldots ,\langle e_i, H^{1/2}Y_N \rangle^2/\|\Sigma^{1/2} H^{1/2} e_i\|^{2}\}.
\]
Using the notation from the previous display, triangle inequality and the fact that $F^{-1}_{\chi^2_1}(1/2) = (\Phi^{-1}(3/4))^{2}$, we arrive at 
\begin{align*}
&\left|\operatorname{Quant}_{1/2 + \eps}\left(\langle e_i, H^{1/2}Y_1 \rangle^2, \ldots,\langle e_i, H^{1/2}Y_N \rangle^2\right) - (\Phi^{-1}(3/4))^{2}\|\Sigma^{1/2} H^{1/2} e_i\|^{2}\right|
\\
&\le\|\Sigma^{1/2} H^{1/2} e_i\|^{2}\left|\operatorname{Quant}_{1/2 + \eps}\left(S_{N, i}\right) - \E \operatorname{Quant}_{1/2 + \eps}\left(S_{N, i}\right)\right|
\\
&\qquad+\|\Sigma^{1/2} H^{1/2} e_i\|^{2}\left|\E \operatorname{Quant}_{1/2 + \eps}\left(S_{N, i}\right) - F^{-1}_{\chi^2_1}(1/2 + \eps)\right|
\\
&\qquad+\|\Sigma^{1/2} H^{1/2} e_i\|^{2}\left|F^{-1}_{\chi^2_1}(1/2 + \eps) - F^{-1}_{\chi^2_1}(1/2)\right|
\\
&= (I)_i + (II)_i + (III)_i.
\end{align*}
By Lemma \ref{lem:orderstatofchisq} we have for some $c_1 > 0$,
\[
\left\|(I)_i + (II)_i\right\|_{\psi_1} \le \frac{c_1\|\Sigma^{1/2} H^{1/2} e_i\|^{2}}{\sqrt{N}},\quad \textrm{and therefore,}\quad \left\|\sum\limits_{i = 1}^d\bigl((I)_i + (II)_i\bigr)\right\|_{\psi_1} \le \frac{c_1\tr(\Sigma H)}{\sqrt{N}},
\]
where the last expression follows from the triangle inequality. 
Using the exact form of the inverse cumulative distribution function of the $\chi^2_1$ distribution and the same technique used to bound the difference of quantiles of half-normal distribution, one can verify that for any $\eps \le 1/4$ we have $\left|F^{-1}_{\chi^2_1}(1/2 \pm \eps) - F^{-1}_{\chi^2_1}(1/2)\right| \le c_2\eps$, where $c_2>0$ is an absolute constant. This readily yields
\[
(III)_i = \|\Sigma^{1/2} H^{1/2} e_i\|^{2}\left|F^{-1}_{\chi^2_1}(1/2 + \eps) - F^{-1}_{\chi^2_1}(1/2)\right| \le c_2\|\Sigma^{1/2} H^{1/2} e_i\|^{2}\eps.
\]
Therefore, for $\eps \le 1/4$, we have $\sum\nolimits_{i = 1}^d(III)_i \le c_2\eps\tr(\Sigma H)$.
Combing the above computations and using the tail bound of \cite[Proposition 2.7.1]{Vershynin2016HDP}, we prove the inequality \eqref{eq:firstineq}.

To prove the second part of the bound we propose a slightly different analysis for the term $(I)_i$. Denote
\[
S^{\prime}_{N, i} = \{|\langle e_i, H^{1/2}Y_1 \rangle|/\|\Sigma^{1/2} H^{1/2} e_i\|, \ldots ,|\langle e_i, H^{1/2}Y_N \rangle|/\|\Sigma^{1/2} H^{1/2} e_i\|\},
\]
and observe that $S^{\prime}_{N, i}$ consists of independent half-normal random variables. 
We have
\begin{align*}
(I)_i &= \|\Sigma^{1/2} H^{1/2} e_i\|^{2}\left|\operatorname{Quant}_{1/2 + \eps}\left(S^{\prime}_{N, i}\right) -\sqrt{\E \operatorname{Quant}_{1/2 + \eps}\left(S_{N, i}\right)}\right|
\\
&\qquad\times\left|\operatorname{Quant}_{1/2 + \eps}\left(S^{\prime}_{N, i}\right) +\sqrt{\E \operatorname{Quant}_{1/2 + \eps}\left(S_{N, i}\right)}\right|.
\end{align*}
We first bound the second multiplier of the expression from the last display. By Lemma \ref{lem:orderstatsofhalfnormal}, with probability at least $1-\delta$, we have 
\begin{align}
    \operatorname{Quant}_{1/2 + \eps}(S^{\prime}_{N, i}) + \sqrt{\E \operatorname{Quant}_{1/2 + \eps}\left(S_{N, i}\right)} &\le 2\Phi_{\operatorname{H}}^{-1}(1/2 + \eps) + c_3\sqrt{\frac{\log(1/\delta)}{N}}.
\end{align}
Now observe that the last expression from the previous line is bounded by some absolute constant given that $\eps\in[0, 1/4]$ and $N \ge c_4\log(1/\delta)$. Using Lemma \ref{lem:orderstatsofhalfnormal} once again together with union bound, we bound the term $(I)_i$, with probability at least $1-\delta$, as follows
\begin{align}
    (I)_i \le c_4\|\Sigma^{1/2} H^{1/2} e_i\|^{2}\sqrt{\frac{\log(1/\delta)}{N}}.
\end{align}
By the union bound, for all $i \in [d]$ we have, with probability at least $1-\delta$,
\begin{align}
    (I)_i \le c_3\|\Sigma^{1/2} H^{1/2} e_i\|^{2}\sqrt{\frac{\log d + \log(1/\delta)}{N}},
\end{align}
whenever $N \ge c_2(\log d + \log(1/\delta))$. Taking the sum over all $i \in [d]$ concludes the proof. 
\end{proof}

\section{Concluding remarks}
\label{sec:conlcudingremarks}
Several natural questions follow.
The first is on the existence of computationally efficient estimators achieving our bounds. It is known that getting a polynomial time algorithm with a linear dependence on $\eps$ in the strong contamination model matching the bound of Theorem \ref{thm:maintheorem} is a challenging problem, even when the covariance matrix is identity. Covariance estimation is an even harder problem from the computational perspective. To the best of our knowledge, it is unknown if there is a polynomial time algorithm achieving the statistical performance of Theorem \ref{thm:covarianceestimation} even with the much weaker $\sqrt{\eps}$-dependence on the contamination level.

One simpler question is if our bounds can be generalized beyond the Gaussian case. The answer is yes, and we opted for explicit Gaussian computations only to make our proofs more reader-friendly. In particular, the proof of Theorem \ref{thm:maintheorem} only uses the following properties of the  distribution:
\begin{enumerate}
\item The distribution of $X - \mu$ is symmetric around the origin.
\item The distribution is spherically symmetric. That is, for any $v \in S^{d-1}$, the distribution of $\langle X - \mu, v\rangle/\sqrt{v^{\top}\Sigma v}$ does not depend on $v$. Denote the density function of this distribution by $f$.
\item The inverse of the cumulative distribution function $F$ that corresponds to the density $f$ satisfies $|F^{-1}(1/2 \pm \eps) - F^{-1}(1/2)| \le c\eps$ for some $c > 0$ and small enough $\eps$.
\item The density function $f$ is separated from zero by some absolute constant for all $x \in [F^{-1}(1/2 - \eps), F^{-1}(1/2 + \eps)]$.
\item The distribution corresponding to the density function $f$ is sub-Gaussian. That is, for $Y$ distributed according to this distribution we have $\|Y\|_{\psi_2} \le c$ for some $c > 0$.  
\end{enumerate}
Following the lines of our proof almost verbatim, one can analyze these more general distributions.
It will be interesting to understand if the sub-Gaussian tails assumption (Property 5) can be avoided. In fact, assuming Properties $1$-$4$, and additionally that $\Sigma = I_d$, combining our techniques and the analysis in \cite[Proposition 1.3]{diakonikolas2019recent}, one can build an estimator $\widehat{\mu}$ satisfying, with probability at least $1-\delta$,
\[
\|\widehat{\mu} - \mu\| \le c\left(\sqrt{\frac{d + \log(1/\delta)}{N}} + \eps\right),
\]
whenever $N \ge c_1(d + \log(1/\delta))$. Here $c, c_1 > 0$ are some absolute constants. A similar bound without the sub-Gaussian assumption is also given by Chen, Gao, and Ren \cite[Section 4]{chen2018robust}. In our case, the sub-Gaussian assumption (Property 5) is needed to control the moment generating function when applying Lemma \ref{lem:pacbayes}, while the proof 
in \cite[Proposition 1.3]{diakonikolas2019recent} is based on the union bound over the $\varepsilon$-net for which we do not need sub-Gaussian tails in the \say{large deviation} regime.

Finally, some of the parameters of Theorem \ref{thm:covarianceestimation} are rather hard to estimate without making additional assumptions on the sample size, and confidence level. One can possibly adapt other approaches, such as, for example, Lepskii's method \cite{lepskiui1990problem}. This could provide an alternative way of tuning these parameters.

\paragraph{Acknowledgments.} The authors would like to thank Ankit Pensia for a discussion on differences between contamination models, Vladimir Ulyanov for a discussion on asymptotic laws for sample quantiles, and Arnak Dalalyan for many insightful discussions and useful comments. The work of AM was supported by the grant Investissements d'Avenir (ANR-11-IDEX-0003/Labex Ecodec/ANR-11-LABX-0047) and by the FAST Advance grant.
{\footnotesize
\bibliography{mybib}
\bibliographystyle{alpha}
}
\end{document}